\numberwithin{equation}{section}
\numberwithin{figure}{section}
\theoremstyle{plain}
\newtheorem{thm}{\protect\theoremname}
\theoremstyle{plain}
\newtheorem{prop}[thm]{\protect\propositionname}
\theoremstyle{plain}
\newtheorem{cor}[thm]{\protect\corollaryname}
\theoremstyle{remark}
\newtheorem{rem}[thm]{\protect\remarkname}
\theoremstyle{definition}
\newtheorem{example}[thm]{\protect\examplename}
\theoremstyle{plain}
\newtheorem{lem}[thm]{\protect\lemmaname}
\theoremstyle{definition}
\newtheorem{defn}[thm]{\protect\definitionname}
\providecommand{\corollaryname}{Corollary}
\providecommand{\definitionname}{Definition}
\providecommand{\examplename}{Example}
\providecommand{\lemmaname}{Lemma}
\providecommand{\propositionname}{Proposition}
\providecommand{\remarkname}{Remark}
\providecommand{\theoremname}{Theorem}
\begin{document}

\title[Dynamics and operators on line arrangements]{Dynamical systems on some elliptic modular surfaces via operators on line arrangements}

\author{Lukas~K\"uhne}
\address{Lukas K\"uhne, Universit\"at Bielefeld, Fakult\"at f\"ur Mathematik, Bielefeld, Germany}
\email{lkuehne@math.uni-bielefeld.de}

\author{Xavier Roulleau}
\address{Xavier Roulleau, Universit\'e d'Angers, CNRS, LAREMA, SFR Math-STIC, F-49000 Angers, France}
\email{xavier.roulleau@univ-angers.fr}

\addtolength{\textwidth}{0mm}
\addtolength{\hoffset}{-0mm} 

\global\long\def\AA{\mathbb{A}}%
\global\long\def\CC{\mathbb{C}}%
 
\global\long\def\BB{\mathbb{B}}%
 
\global\long\def\PP{\mathbb{P}}%
 
\global\long\def\QQ{\mathbb{Q}}%
 
\global\long\def\RR{\mathbb{R}}%
 
\global\long\def\FF{\mathbb{F}}%

\global\long\def\DD{\mathbb{D}}%
 
\global\long\def\NN{\mathbb{N}}%
\global\long\def\ZZ{\mathbb{Z}}%
 
\global\long\def\HH{\mathbb{H}}%

\global\long\def\Gal{{\rm Gal}}%
\global\long\def\GG{\mathbb{G}}%
 
\global\long\def\UU{\mathbb{U}}%

\global\long\def\bA{\mathbf{A}}%

\global\long\def\kP{\mathfrak{P}}%
 
\global\long\def\kQ{\mathfrak{q}}%
 
\global\long\def\ka{\mathfrak{a}}%
\global\long\def\kP{\mathfrak{p}}%
\global\long\def\kn{\mathfrak{n}}%
\global\long\def\km{\mathfrak{m}}%

\global\long\def\cA{\mathfrak{\mathcal{A}}}%
\global\long\def\cB{\mathfrak{\mathcal{B}}}%
\global\long\def\cC{\mathfrak{\mathcal{C}}}%
\global\long\def\cD{\mathcal{D}}%
\global\long\def\cH{\mathcal{H}}%
\global\long\def\cK{\mathcal{K}}%

\global\long\def\cF{\mathcal{F}}%
 
\global\long\def\cI{\mathfrak{\mathcal{I}}}%
\global\long\def\cJ{\mathcal{J}}%

\global\long\def\cL{\mathcal{L}}%
\global\long\def\cM{\mathcal{M}}%
\global\long\def\cN{\mathcal{N}}%
\global\long\def\cO{\mathcal{O}}%
\global\long\def\cP{\mathcal{P}}%
\global\long\def\cQ{\mathcal{Q}}%

\global\long\def\cR{\mathcal{R}}%
\global\long\def\cS{\mathcal{S}}%
\global\long\def\cT{\mathcal{T}}%
\global\long\def\cW{\mathcal{W}}%

\global\long\def\kBS{\mathfrak{B}_{6}}%
\global\long\def\kR{\mathfrak{R}}%
\global\long\def\kU{\mathfrak{U}}%
\global\long\def\kUn{\mathfrak{U}_{9}}%
\global\long\def\ksU{\mathfrak{U}_{7}}%

\global\long\def\a{\alpha}%
 
\global\long\def\b{\beta}%
 
\global\long\def\d{\delta}%
 
\global\long\def\D{\Delta}%
 
\global\long\def\L{\Lambda}%
 
\global\long\def\g{\gamma}%
\global\long\def\om{\omega}%

\global\long\def\G{\Gamma}%
 
\global\long\def\d{\delta}%
 
\global\long\def\D{\Delta}%
 
\global\long\def\e{\varepsilon}%
 
\global\long\def\k{\kappa}%
 
\global\long\def\l{\lambda}%
 
\global\long\def\m{\mu}%

\global\long\def\o{\omega}%
 
\global\long\def\p{\pi}%
 
\global\long\def\P{\Pi}%
 
\global\long\def\s{\sigma}%

\global\long\def\S{\Sigma}%
 
\global\long\def\t{\theta}%
 
\global\long\def\T{\Theta}%
 
\global\long\def\f{\varphi}%
 
\global\long\def\ze{\zeta}%

\global\long\def\deg{{\rm deg}}%
 
\global\long\def\det{{\rm det}}%

\global\long\def\Dem{Proof: }%
 
\global\long\def\ker{{\rm Ker}}%
 
\global\long\def\im{{\rm Im}}%
 
\global\long\def\rk{{\rm rk}}%
 
\global\long\def\car{{\rm car}}%
\global\long\def\fix{{\rm Fix( }}%

\global\long\def\card{{\rm Card }}%
 
\global\long\def\codim{{\rm codim}}%
 
\global\long\def\coker{{\rm Coker}}%

\global\long\def\pgcd{{\rm pgcd}}%
 
\global\long\def\ppcm{{\rm ppcm}}%
 
\global\long\def\la{\langle}%
 
\global\long\def\ra{\rangle}%

\global\long\def\Alb{{\rm Alb}}%
 
\global\long\def\Jac{{\rm Jac}}%
 
\global\long\def\Disc{{\rm Disc}}%
 
\global\long\def\Tr{{\rm Tr}}%
 
\global\long\def\Nr{{\rm Nr}}%

\global\long\def\NS{{\rm NS}}%
 
\global\long\def\Pic{{\rm Pic}}%

\global\long\def\Km{{\rm Km}}%
\global\long\def\rk{{\rm rk}}%
\global\long\def\Hom{{\rm Hom}}%
 
\global\long\def\End{{\rm End}}%
 
\global\long\def\aut{{\rm Aut}}%
 
\global\long\def\SSm{{\rm S}}%

\global\long\def\psl{{\rm PSL}}%
 
\global\long\def\cu{{\rm (-2)}}%
 
\global\long\def\mod{{\rm \,mod\,}}%
 
\global\long\def\cros{{\rm Cross}}%
 
\global\long\def\nt{z_{o}}%

\global\long\def\co{\mathfrak{\mathcal{C}}_{0}}%

\global\long\def\ldt{\Lambda_{\{2\},\{3\}}}%
 
\global\long\def\ltd{\Lambda_{\{3\},\{2\}}}%
\global\long\def\lldt{\lambda_{\{2\},\{3\}}}%

\global\long\def\ldq{\Lambda_{\{2\},\{4\}}}%
 
\global\long\def\lldq{\lambda_{\{2\},\{4\}}}%
 
\subjclass[2000]{Primary: 14N20, 14H50, 37D40}
\begin{abstract}
This paper further studies the matroid realization space of a specific deformation of the regular $n$-gon with 
its lines of symmetry.
Recently, we obtained that these particular realization spaces are birational to the elliptic modular 
surfaces $\Xi_{1}(n)$ over the modular curve $X_1(n)$. 
Here, we focus on the peculiar cases when $n=7,8$ in more detail.
We obtain concrete quartic surfaces in~$\PP^3$ equipped with a 
dominant rational self-map stemming from an operator on line 
arrangements, which yields K3 surfaces with a dynamical system 
that is semi-conjugated to the plane.
\end{abstract}

\maketitle

\section{Introduction}

A \emph{line arrangement} $\cC=\ell_{1}+\dots+\ell_{k}$ is a finite union
of lines $\ell_{j}$ in the projective plane $\PP^{2}$. Line arrangements
are ubiquitous objects studied in various fields such as topology,
algebra, algebraic geometry, see for instance~\cite{Suciu,Yoshinaga} for two surveys.
In \cite{OSO}, the second author described a number of operators
acting on line arrangements: if $\mathfrak{n,m}$ are sets of integers
at least $2$, the operator $\L_{\mathfrak{m,n}}$ associates to a line
arrangement~$\cC$ the line arrangement $\L_{\mathfrak{m,n}}(\cC)$
which is the union of the lines that contain $n\in\mathfrak{n}$ points
among the $m$-points of $\cC$, for $m\in\mathfrak{m}$ (recall that
an $m$-point of $\cC$ is a point where exactly $m$ lines of $\cC$
meet). For example $\L_{\{2\},\{3\}}(\cC)$ is the union of the lines
that contain exactly three double points of $\cC$ (that line arrangement might be empty).

A labeled line arrangement $\cC=(\ell_{1},\dots,\ell_{k})$ is a line
arrangement for which one fixes the order of the lines. The
configuration of a labeled line arrangement $\cC$ is described by its associated
\emph{matroid} $M=M(\cC)$. Conversely, given a matroid $M$ (a combinatorial
object), one can look at line arrangements $\cC$ for which $M(\cC)=M$.
When such a $\cC$ exists, one says that $\cC$ is a realization of
$M$. Let us denote by $\cR=\cR(M)$ the moduli space of realizations
of $M$: a point of $\cR$ is the orbit under the action of the projective general linear group $\text{PGL}_{3}$
of a realization of $M$. The space of all realizations of $M$ is
denoted by $\kU=\kU(M)$ and there is a natural quotient map $\kU\to\cR$.

In \cite{KR}, we constructed a realizable matroid
$M_{n}$  for any $n\geq7$ that is based on the regular $n$-gon.
Interestingly, there exists an operator $\L$ among the ones we
described above (for example if $n=2k+1$ is odd, then $\L=\L_{\{2\},\{k\}}$)
which acts non-trivially on $\kU(M_n)$:
 if $\cC$ is a (generic) realization of $M_{n},$ then $\L(\cC)$ is also a realization of $M_{n}$. We obtain in that
way a dominant self-rational map $\l$ on the realization space $\cR_{n}=\cR(M_{n})$.

The main result of \cite{KR} establishes that the realization space $\cR_{n}$ is an
open dense sub-scheme of the \emph{elliptic modular surface} $\Xi_{1}(n)$, a well-studied surface, see e.g. Shioda's paper~\cite{Shioda}.
Recall that this surface $\Xi_{1}(n)$ parametrizes (up to isomorphisms)
triples $(E,t,p)$ of an elliptic curve and points $t,p$ on $E$
such that $t$ has order $n$. The modular curve $X_{1}(n)$ parametrizes
(up to isomorphisms) pairs $(E,t)$, where $E,t$ are as above. The
map $(E,t,p)\to(E,t)$ defines an elliptic fibration on $\Xi_{1}(n)$,
with fiber over the point $(E,t)$ isomorphic to $E$. For any integer
$m$, there is a natural multiplication by $m$ rational map of the
elliptic surface $\Xi_{1}(n)$. We obtain in \cite{KR} that, through
the identification of $\cR_{n}$ as an open subscheme of $\Xi_{1}(n)$,
the rational self map $\l$ induced by $\L$ is the multiplication
by $-2$ map acting on $\Xi_{1}(n)$, in particular $\l$ has degree
$4$. 

The aim of the present paper is to study the peculiar cases
when $n=7,8$ in more detail.
In particular, we give another proof that the surface $\cR_{n}$
is an open dense subscheme of $\Xi_{1}(n),$ and the degree of $\l$
is $4$ in these cases.
From now on assume $n\in\{7,8\}$; in those cases, we obtain 
(singular) models of $\Xi_{1}(n)$ as quartic
surfaces in $\PP^{3}$. There is a natural section $\cR_{n}\to\kU_{n}=\kU(M_{n})$
of the quotient map $\kU_{n}\to\cR_{n}$, so that one may consider
$\cR_{n}$ as contained in $\kU_{n}$, and therefore one may consider
a class as a realization of $M_{n}$. Using that fact, we are able
to give explicit polynomials for the action $\l=\l(n)$ of $\L=\L(n)$ on
$\cR_n\subset\PP^{3}$.

Recall that a dynamical system is a pair $(X,\l)$ of a variety $X$ and a dominant rational map $\l:X\to X$.
A dynamical
system $(X,\l)$ is called \emph{semi-conjugated} to a dynamical system $(Y,\mu)$ if there exists
a generically finite rational dominant map $\pi:X\to Y$ such that
$\pi\circ\l=\mu\circ\pi$.
A principal result of this article is the following.
\begin{thm}
For $n\in\{7,8\}$, the dynamical system $(\cR_{n},\l)$ is semi-conjugated
to $(\PP^{2},F)$ where $F:\PP^{2}\dashrightarrow\PP^{2}$ is an explicitly
described rational self map; the dominant rational map $\pi:\cR_{n}\to\PP^{2}$
such that $\pi\circ\l=F\circ\pi$ is a double cover of $\PP^{2}$
branched along a sextic curve.
\end{thm}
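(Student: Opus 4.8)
The plan is to realize $\pi$ as the quotient by the fibrewise negation involution. On $\Xi_{1}(n)$ let $\iota$ denote the involution $(E,t,p)\mapsto(E,t,-p)$; it fixes the pair $(E,t)$ and acts on each fibre $E$ as $x\mapsto-x$, so it is a regular involution of $\Xi_{1}(n)$ whose restriction to the dense open subscheme $\cR_{n}$ is a birational involution. The first step is to record that $\l$ and $\iota$ commute. Since, by the main result of \cite{KR}, $\l$ is the multiplication-by-$(-2)$ map of the elliptic surface, and since multiplication maps are fibrewise group homomorphisms, one has fibrewise $[-2]\circ[-1]=[2]=[-1]\circ[-2]$, hence $\l\circ\iota=\iota\circ\l$.

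Next I would pass to the quotient $Y=\Xi_{1}(n)/\iota$. On each fibre the quotient $E/\{\pm1\}$ is a $\PP^{1}$, so $Y$ is ruled over $X_{1}(n)$; as $X_{1}(n)$ has genus $0$ for $n\in\{7,8\}$, the surface $Y$ is rational and hence birational to $\PP^{2}$. Composing the quotient map $\cR_{n}\to Y$ with a birational identification $Y\dashrightarrow\PP^{2}$ yields the dominant rational double cover $\pi:\cR_{n}\dashrightarrow\PP^{2}$. Because $\l$ commutes with the covering involution $\iota$, it descends to a rational self-map $F$ of $\PP^{2}$ with $\pi\circ\l=F\circ\pi$; concretely, rewriting the explicit polynomials describing $\l$ on the quartic model in terms of a chosen system of $\iota$-invariant coordinates produces $F$ explicitly. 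As a consistency check, on a general fibre $F$ restricts to the degree-$4$ Latt\`es map of $\PP^{1}$ induced by $[2]$, in accordance with $\deg F=\deg\l=4$.

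It then remains to identify the branch locus. The ramification of $\pi$ occurs exactly along the fixed locus of $\iota$, namely the $2$-torsion subscheme $\{p:2p=0\}$, a degree-$4$ multisection of $\Xi_{1}(n)\to X_{1}(n)$; its image $B\subset\PP^{2}$ is the branch curve. To see $\deg B=6$ I would invoke the canonical bundle formula: writing $B\in|2L|$ one has $K_{\cR_{n}}=\pi^{*}(K_{\PP^{2}}+L)$, and since $\iota$ scales the holomorphic $2$-form by $-1$ (negation acts by $-1$ on the invariant differential of each fibre) the smooth minimal model of $\cR_{n}$ is a K3 surface with trivial canonical class, forcing $L=-K_{\PP^{2}}=\cO_{\PP^{2}}(3)$ and thus $\deg B=6$. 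Alternatively, and in keeping with the explicit models, one computes $B$ directly as the image of the $2$-torsion multisection in the invariant coordinates and reads off that it is a plane sextic; this second route also re-proves the K3 property, a double cover of $\PP^{2}$ branched along a sextic being a K3 surface.

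The main obstacle lies in the explicit birational bookkeeping behind the middle steps: producing a concrete system of $\iota$-invariant coordinates that realizes the abstract quotient $Y$ as $\PP^{2}$, checking that $\pi$ is generically two-to-one in these coordinates, and carrying out the elimination that both expresses $\l$ as $F$ and exhibits $B$. The delicate point is to control the indeterminacy introduced by the identification $Y\dashrightarrow\PP^{2}$ and by the singular fibres of $\Xi_{1}(n)$, so that the degree count for the branch sextic survives passage to the chosen plane model.
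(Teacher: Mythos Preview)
Your approach is conceptually sound and offers a genuinely different route from the paper's. You work with the fibrewise negation $\iota=[-1]$ on $\Xi_{1}(n)$, argue that $\l=[-2]$ commutes with it, pass to the rational quotient $\Xi_{1}(n)/\iota$, and invoke the canonical bundle formula on a K3 double cover to force the branch to be a sextic. The paper, by contrast, proceeds entirely through the explicit quartic model $Z_{n}\subset\PP^{3}$: it computes the defining polynomials $(P_{1}:\dots:P_{4})$ of $\l$ via the period map and \emph{discovers} the numerical coincidence $\deg_{y_{1}}P_{1}=1$, $\deg_{y_{1}}P_{k}=0$ for $k\geq2$. This immediately shows that $\l$ descends through the linear projection $\pi$ from the double point $(1:0:0:0)$ of $Z_{n}$, with $F=(P_{2}:P_{3}:P_{4})|_{y_{1}=0}$ given by ready-made polynomials; the branch sextic is then simply the discriminant of the quartic equation with respect to $y_{1}$, and the semi-conjugacy is confirmed by checking the identity $Q_{1}(F)=Q_{1}\cdot R^{2}$ of Section~\ref{subsecTheoretical-SConj} directly.

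What each approach buys: the paper's method delivers the ``explicitly described'' $F$ and the concrete branch curves (line${}+{}$quintic for $n=7$, conic${}+{}$quartic for $n=8$) with no further work, but it hinges on a computational fact---the degree-in-$y_{1}$ pattern---that is sensitive to the choice of labeling (see Remark~\ref{rem:No-Cano-label-2}) and is not predicted in advance. Your argument explains \emph{why} such a semi-conjugacy must exist for every $n$ with $g(X_{1}(n))=0$, independent of any labeling, but it leaves the explicit $F$ and the identification $Y\dashrightarrow\PP^{2}$ to be produced; in particular your canonical-bundle computation presupposes a genuine finite double cover to $\PP^{2}$, so the ``birational bookkeeping'' you flag is exactly where the content lies. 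A natural synthesis would be to verify that the covering involution of the paper's projection from $(1:0:0:0)$ coincides (up to an automorphism from $\aut(M_{n})$) with your $\iota$, thereby explaining the paper's numerical coincidence a priori.
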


The surfaces $\Xi_{1}(7),\,\Xi_{1}(8)$ are K3 surfaces; to our knowledge
these are the first examples of a degree~$>1$ dynamical system on a K3 surfaces
that is semi-conjugated to the plane. 

Let us describe the structure of this paper and some further results.
In Section \ref{sec:Action-of-}, we start by describing the line operators $\L$
and general results on matroids. In Subsection \ref{subsecTheoretical-SConj},
we study under which conditions a K3 surface which is the double cover
of the $\PP^2$ may be semi-conjugated to $\PP^2$.
Subsequently,
we study the case $n=7$ in Section \ref{sec:The-heptagon}:
we start by recalling the definition of the matroid $M_{7}$ and then show that
$\L_{\{2\},\{3\}}$ induces a rational self map $\lambda_{\{2\},\{3\}}$
on the quartic surface $\cR_{7}\subset\PP^{3}$.
We then compute the
degree of $\lambda_{\{2\},\{3\}}$ and prove that $\cR_{7}$
is an open subset of the elliptic modular surface~$\Xi_{1}(7)$. The
automorphism group of the matroid $M_{7}$ is the order $42$ Frobenius
group. There is a natural action of that group on the surface $\cR_{7}$. 
We show that this action is faithful.
The quotient surface $\cR_{7}/\aut(M_{7})$ is the moduli space for
unlabeled line arrangements coming from realizations of $M_{7}$:
we obtain that this is a rational surface.  In Subsection
\ref{subsec:Practical-SemiConj}, we describe explicitly the semi-conjugacy
of $\cR_{7}$ (or equivalently $\Xi_1(7)$) with $\PP^{2}$. The branch loci
of the double cover $\Xi_1(7)\to \PP^2$ is the union of a line and a 
singular quintic curve which we describe. Section \ref{sec:The-Octagon-and}
follows a similar pattern for the case $n=8$. In that case, the branch loci of the 
double cover $\Xi_1(8)\to \PP^2$ is union of a conic and a singular
 quartic curve. We moreover describe some $3$-periodic 
line arrangements for $\Lambda$; their classes are fixed points for the 
action of $\lambda$ on $\cR_8$. 

We remark that for $n=9$, one may similarly obtain that $\cR_9$ (contained 
as a sextic surface in $\PP^3$) is birational to $\Xi_1(9)$. That 
elliptic surface is no longer a K3 surface and we could not find 
a semi-conjugacy with the plane. 

Computations in this paper are based on Magma \cite{Magma} and 
\texttt{OSCAR} \cite{Oscar}. In the arXiv ancillary file of this paper 
are some datas related to these computations. 

\textbf{Acknowledgments} The authors are grateful to Bert van Geemen
and to Keiji Oguiso for interesting discussions.
Moreover, the authors thank the anonymous referees for their helpful remarks which helped to improve the article.
LK is supported by the SFB-TRR 358 -- 491392403 ``Integral Structures in Geometry and Representation Theory'' (DFG). XR is thankful to Max Planck Institute for Mathematics in Bonn for its hospitality and financial support. 
XR is also supported by the Centre Henri Lebesgue (ANR-11-LABX-0020-01).

\section{\label{sec:Action-of-}Notations and definitions}

Throughout this article we assume to be working over the field $\CC$.

\subsection{\label{subsec:Line-arrangement-and}Line arrangements and the operator $\Lambda_{\mathfrak{n,m}}$}

A line arrangement $\cC=\ell_{1}+\dots+\ell_{n}$ is a union of 
finitely many distinct lines in $\PP^{2}$. A labeled line arrangement
$\cC=(\ell_{1},\dots,\ell_{n})$ is a line arrangement
with a numbering of the lines. We sometime put a superscript
$^{\ell}$ (resp.~$^{u}$) when we want to emphasize that an arrangement
or related objects has (resp. does not have) a labeling. 

For an integer $k\geq2$, a $k$-point of the line arrangement $\cC$
is a point where exactly $k$ lines of $\cC$ meet. 
As in \cite{OSO},
for a subset $\mathfrak{n}$ of integers at least $2$,
let us denote by $\cP_{\mathfrak{n}}(\cC)$ the set of $k$-points
of $\cC$ for all $k\in\mathfrak{n}$. 
We denote by $t_{k}=t_{k}(\cC)=|\cP_{\{k\}}(\cC)|$ the number of $k$-points of $\cC$.
For a finite set of point $\cP$
in $\PP^{2}$ and $\mathfrak{n}$ as above, we denote by $\cL_{\mathfrak{n}}(\cP)$
the set of lines which contain exactly $n$ points in $\cP$ for some $n\in\mathfrak{n}$.

For subsets $\mathfrak{n,m}$ of integers at least $2$,
let us denote by $\Lambda_{\mathfrak{n,m}}(\cC)=\cL_{\mathfrak{m}}\circ\cP_{\mathfrak{n}}(\cC)$
the line arrangement that contains all
lines of $\PP^{2}$ containing exactly $m$ points of $\cP_{\mathfrak{n}}(\cC)$ for $m\in \mathfrak{m}$.
For example $\L_{\{2\},\{3,4\}}(\cC)$ is the union of the lines that
contain three or four double points of $\cC$. 
The arrangement could be the empty arrangement if no such lines exists.

\subsection{\label{sec:Matroids}Matroids and the period map of the moduli of
a matroid}

A matroid is a fundamental and actively studied object in combinatorics.
Matroids generalize linear dependency in vector spaces as well as forests in graphs.
See e.g.~\cite{Oxley} for a comprehensive treatment of matroids.
We just briefly mention a few concepts about matroids that are relevant for this article.

A \emph{matroid} is a pair $M=(E,\cB)$, where $E$ is a finite \emph{ground set}
of elements called atoms and $\cB$ is a nonempty collection of subsets of
$E$, called \emph{bases}, satisfying an exchange property reminiscent from linear algebra.

The prime examples of matroids arise by choosing a finite set $E$ of vectors in a vector space and declaring the maximal linearly independent subsets of $E$ as bases. In our case we obtain matroids through line arrangements:
If $\cC=(\ell_{1},\dots,\ell_{m})$
is a labeled line arrangement, the subsets $\{i,j,k\}\subseteq\{1,\dots,m\}$
such that the lines $\ell_{i},\ell_{j},\ell_{k}$ meet in three distinct
points are the bases of a matroid $M(\cC)$ over the set $\{1,\dots,m\}$.
We say that $M(\cC)$ is the matroid associated to $\cC$.

We denote by $\aut(M)$ the \emph{automorphism group} of the matroid $M$,
i.e., the set of isomorphisms from $M$ to $M$.

A \emph{realization} (over some field) of a matroid $M=(E,\cB)$ is a converse
operation to the association $\cC\to M(\cC)$: it is a
$3\times m$-matrix with non-zero columns $C_{1},\dots,C_{m}$,
which are considered up to a multiplication by a scalar (thus as point
in the projective plane) such that a subset $\{i_{1},i_{2},i_{3}\}$
of $E$ of size $3$ is a basis if and only if the $3\times 3$ minor $|C_{i_{1}},C_{i_{2}},C_{i_{3}}|$
is nonzero.
We denote by $\ell_{i}$ the line with normal vector the
point $C_{i}\in\PP^{2}$.

If $\cC=(\ell_{1},\dots,\ell_{m})$ is a realization of $M$ and $\g\in PGL_{3}$,
then $(\g\ell_{1},\dots,\g\ell_{m})$ is another realization of $M$; we denote
by $[\cC]$ the orbit of $\cC$ under that action of $PGL_{3}$. The
\emph{moduli space} $\cR(M)$ of realizations of $M$ parametrizes the orbits
$[\cC]$ of realizations.
A more detailed introduction to these moduli spaces together with a description
of a software package in \texttt{OSCAR} that can compute these spaces is given in~\cite{Oscar}.

In this article, we always assume that each
subset of three elements of the first four atoms is a basis (otherwise,
we replace $M$ by a matroid isomorphic to it). Then in the moduli
space $\cR(M)$, one can always map the first four vectors of $\cC\in[\cC]$
to a fixed projective basis, so that each element $[\cC]$ of $\cR(M)$
has a canonical representative, which we will identify with $[\cC]$.

A useful tool for the computations related to the moduli space $\cR=\cR(M)$
of realizations of a matroid $M$ is what we call the period map: 
Let us denote by $\kU=\kU(M)$ the scheme of all realizations of $M$
in $\PP^{2}$. By analogy with similar objects, we call the quotient
map 
\[
\mathfrak{q}:\kU(M)\to\cR(M)
\]
the period map; a point $c$ of $\cR=\cR(M)$ is the class $c=[\cC]$
of a realization $\cC$.
Once a basis is fixed,
each class $c$ has a unique representative $\cC_{0}$ and we can
(and we will) identify $c$ with that representative. 

It often occurs that $\cR$ is embedded in a space $\mathbb{S}=\mathbb{S}(y_{1},\dots,y_{k})$
(affine or projective) of small dimension, like $\PP^{3}$. The
coordinates of the normal vectors $n^{(j)}=(n_{1}^{(j)}:n_{2}^{(j)}:n_{3}^{(j)})$
of $\cC_{0}$ are then polynomials $n_{1}^{(j)}=P_{1}^{(j)}(y),\dots,n_{3}^{(j)}=P_{3}^{(j)}(y)$
in the coordinates $y_{1},\dots,y_{k}$ of $\cR$ in $\mathbb{S}$. 

One often arrives at the natural question on computing the point $y=(y_{1},\dots,y_{k})$
in $\cR$ from the knowledge of the normal vectors $n$. In other
words, we need an explicit form of the period map $\mathfrak{q}$
as a map from $\kU$ to the scheme $\cR$ embedded in the space $\mathbb{S}$.
The answer to that problem are polynomials (or rational functions)
$Q_{1},\dots,Q_{k}$ in the coordinates of the normal vectors $n^{(1)},\dots,n^{(m)}$
etc.; here $m$ is the number of lines in an arrangement.

\subsection{\label{subsecTheoretical-SConj}Degree two K3 surfaces semi-conjugated
to the plane}

Let $C_{1}:Q_{1}=0$ be a sextic curve with at most ADE singularities,
so that the desingularization $X^{s}$ of the associated double cover
\[
X=\{y^{2}=Q_{1}(z_{1},z_{2},z_{3})\}\hookrightarrow\PP(3,1,1,1)
\]
is a K3 surface. Let $F:\PP^{2}\to\PP^{2}$ be a rational self-map
defined by coprime homogeneous polynomials $(F_{1},F_{2},F_{3})$
of degree $m$. Suppose that $F^{*}C_{1}=C_{1}+2D$, for an effective
divisor $D$; in algebraic terms, that means that we assume that
\[
Q_{1}(F_{1},F_{2},F_{3})=Q_{1}\cdot R^{2},
\]
for some polynomial $R$.
Then the following relation holds
\[
(y\,R(z))^{2}=Q_{1}(z)\,R(z)^{2}=Q_{1}(F_{1}(z),F_{2}(z),F_{3}(z)),
\]
where $z=(z_{1}:z_{2}:z_{3})\in\PP^{2}$. Hence, the rational map 
\[
\tilde{F}:(y;z)\dashrightarrow(y\,R(z);F_{1}(z):F_{2}(z):F_{3}(z))
\]
is a rational self-map acting on the K3 surface $X^{s}$.
Let $\pi:X\to\PP^{2}$ be the double cover map. The following diagram
\begin{equation}
\begin{array}{ccc}
X & \stackrel{\tilde{F}}{\dashrightarrow} & X\\
\pi\downarrow &  & \pi\downarrow\\
\PP^{2} & \stackrel{F}{\dashrightarrow} & \PP^{2}
\end{array}\label{eq:com-Diag}
\end{equation}
is commutative and, by analogy with other dynamical systems, we say
that the dynamical system $(X,\tilde{F})$ is \emph{semi-conjugated} to $(\PP^{2},F)$.
\begin{example}
Let $C$ be an irreducible curve of degree $6$ with $10$ nodes.
A Coble surface $Y$ is the blow-up of $\PP^{2}$ at the $10$ nodal
singularities of $C$. The group of birational transformations $G$
preserving $C$ is infinite, it is generated by Bertini involutions
centered at the nodal points of $C$. When $C$ is generic, the group
$G$ lifts to $Y$ and the elements of $G$ become automorphisms of
$Y$. The automorphism group $G\subset\aut(Y)$ preserves the pull-back
$C'$ of $C$, thus taking the double cover of $Y$ branched over
$C'$, one gets a smooth K3 surface $X$ and the group $G$ is in
fact the automorphism group of $X$ (see e.g. \cite{CD}). The surface
$X$ is also the minimal desingularization of the double cover branched
over $C$ and the diagram \eqref{eq:com-Diag} is commutative.
\end{example}

\section{\label{sec:The-heptagon}The heptagon}

\subsection{$\protect\ldt$ is a\label{subsec:rational-self} rational self-map
on $\protect\cR_{7}$ and $\protect\ksU$}

\subsubsection{Definition of the matroid $M_{7}$}

The matroid $M_{7}$ has $14$ atoms $1,\dots,7,1',\dots,7'$ and the bases are the
triples $\{a,b,c\}$ with $\{a,b\}\subset\{1,\dots,7\}$ and $c\in\{1',\dots,7'\}$
such that $a+b\neq 2c \mod 7$.
A sketch of $M_7$ is described in Figure \ref{fig:MATROID}, where the atoms $i\in \{1,\dots,7\}$ and $j\in \{1',\dots,7'\}$ correspond to the lines $\ell_i$ and $\ell_j'$, resp., and three lines form a basis if they do not meet in one point. Note that the central singularity of arrangement in Figure~\ref{fig:MATROID} is not part of the matroid and therefore removed.
\begin{figure}[h]
\begin{center}

\begin{tikzpicture}[scale=0.25]

\clip(-21.365,-15.33) rectangle (18.02,17.88);
\draw [line width=0.3mm,domain=-20.365:18.02] plot(\x,{(-10.302397111536783-1.8704694055762006*\x)/-2.34549444740409});
\draw [line width=0.3mm,domain=-20.365:18.02] plot(\x,{(-7.956902664132689-1.8704694055762001*\x)/2.345494447404089});
\draw [line width=0.3mm,domain=-20.365:18.02] plot(\x,{(-7.958578673150907--0.6675628018689426*\x)/2.9247837365454705});
\draw [line width=0.3mm,domain=-20.365:18.02] plot(\x,{(--10.883362409696378-0.6675628018689439*\x)/2.9247837365454705});
\draw [line width=0.3mm] (-3.,-15.33) -- (-3.,17.88);
\draw [line width=0.3mm,domain=-20.365:18.02] plot(\x,{(-9.00376595807958--2.702906603707257*\x)/1.3016512173526746});
\draw [line width=0.3mm,domain=-20.365:18.02] plot(\x,{(-10.305417175432254--2.702906603707257*\x)/-1.3016512173526738});
\draw [line width=0.3mm,color=blue,domain=-20.365:18.02] plot(\x,{(-4.226434953898144-0.*\x)/-8.452869907796291});
\draw [line width=0.3mm,color=blue,domain=-20.365:18.02] plot(\x,{(--9.502155104945505-8.240938811152406*\x)/17.11248576920136});
\draw [line width=0.3mm,color=blue,domain=-20.365:18.02] plot(\x,{(--2.3522804693791564-10.276282612990718*\x)/2.34549444740409});
\draw [line width=0.3mm,color=blue,domain=-20.365:18.02] plot(\x,{(-2.340052480306287-8.240938811152398*\x)/-6.571929401302233});
\draw [line width=0.3mm,color=blue,domain=-20.365:18.02] plot(\x,{(-4.2318769209959335--8.240938811152395*\x)/-6.571929401302231});
\draw [line width=0.3mm,color=blue,domain=-20.365:18.02] plot(\x,{(-0.006786021975052847--10.27628261299071*\x)/2.345494447404089});
\draw [line width=0.3mm,color=blue,domain=-20.365:18.02] plot(\x,{(--4.223414890002658--4.573376009283455*\x)/9.496713137847706});
\begin{scriptsize}
\draw [fill=black] (-3.,2.) circle (2.6mm);
\draw[color=black] (-3.8,2.99) node {$p_{23}$};

\draw [fill=black] (-3.,-1.) circle (2.6mm);
\draw[color=black] (-2.41,-0.1875) node {$p_{34}$};

\draw [fill=black] (-0.6545055525959113,-2.8704694055762) circle (2.6mm);
\draw[color=black] (-0.07,-2.0325) node {$p_{45}$};

\draw [fill=black] (2.2702781839495594,-2.2029066037072575) circle (2.6mm);
\draw[color=black] (2.855,-1.3575) node {$p_{56}$};

\draw [fill=black] (3.571929401302234,0.5) circle (2.6mm);
\draw[color=black] (4.99,1.0425) node {$p_{67}$};

\draw [fill=black] (2.2702781839495603,3.2029066037072567) circle (2.6mm);
\draw[color=black] (2.855,4.0425) node {$p_{17}$};

\draw [fill=black] (-0.65450555259591,3.8704694055762006) circle (2.6mm);
\draw[color=black] (-0.07,4.7175) node {$p_{12}$};
\draw[color=black] (-18,-10.99) node {$\ell_2$};
\draw[color=black] (14.35,-13.5) node {$\ell_4$};
\draw[color=black] (17,2.1325) node {$\ell_5$};
\draw[color=black] (-19.735,7.5) node {$\ell_1$};
\draw[color=black] (-2.1,-14.2) node {$\ell_3$};
\draw[color=black] (10.6,17.1825) node {$\ell_6$};
\draw[color=black] (-5.3,17.1825) node {$\ell_7$};

\draw [fill=black] (-4.880940506494056,0.5) circle (2.5mm);
\draw[color=black] (-4.9,1.5525) node {$p_{24}$};

\draw[color=blue] (-19.6,1.2525) node {$\ell_3'$};

\draw [fill=black] (-12.49671313784771,6.573376009283462) circle (2.5mm);
\draw[color=black] (-11.905,7.3275) node {$p_{14}$};

\draw [fill=black] (-12.496713137847706,-5.573376009283455) circle (2.5mm);
\draw[color=black] (-12.905,-4.8225) node {$p_{25}$};

\draw [fill=black] (-3.,-13.146752018566911) circle (2.5mm);
\draw[color=black] (-4.3,-12.9) node {$p_{36}$};

\draw [fill=black] (8.84220758525179,-10.443845414859652) circle (2.5mm);
\draw[color=black] (9.925,-9.6825) node {$p_{47}$};

\draw [fill=black] (14.112485769201351,0.5) circle (2.5mm);
\draw[color=black] (14.69,1.2525) node {$p_{15}$};

\draw [fill=black] (8.842207585251794,11.443845414859656) circle (2.5mm);
\draw[color=black] (8.125,12.1875) node {$p_{26}$};

\draw [fill=black] (-3.,14.146752018566918) circle (2.5mm);
\draw[color=black] (-1.71,14.1875) node {$p_{37}$};

\draw [fill=black] (-3.,4.405813207414516) circle (2.5mm);
\draw[color=black] (-2.41,5.1675) node {$p_{13}$};

\draw [fill=black] (-3.,-3.4058132074145138) circle (2.5mm);
\draw[color=black] (-3.99,-2.9625) node {$p_{35}$};

\draw [fill=black] (1.2264349538981438,-4.3704694055762) circle (2.5mm);
\draw[color=black] (0.0,-4.275) node {$p_{46}$};

\draw [fill=black] (4.615772631353649,2.667562801868942) circle (2.5mm);
\draw[color=black] (6.195,2.925) node {$p_{16}$};

\draw [fill=black] (1.2264349538981456,5.3704694055762) circle (2.5mm);
\draw[color=black] (1.8,6.6125) node {$p_{27}$};

\draw [fill=black] (4.615772631353649,-1.667562801868944) circle (2.5mm);
\draw[color=black] (5.195,-0.9075) node {$p_{57}$};

\draw[color=blue] (15.,-8.) node {$\ell_6'$};
\draw[color=blue] (2.04,-13.1825) node {$\ell_5'$};
\draw[color=blue] (-12.5,-14) node {$\ell_4'$};
\draw[color=blue] (-12.,17.1825) node {$\ell_2'$};
\draw[color=blue] (3.,17.1825) node {$\ell_1'$};
\draw[color=blue] (14.,8.2) node {$\ell_7'$};
\end{scriptsize}
\draw [fill=white] (0.15,0.5) circle (7mm);

\end{tikzpicture}

\end{center} 

\caption{\label{fig:MATROID}The matroid $M_{7}$ whose construction is based on the regular heptagon.}
\end{figure}

Let $\cA_{1}$ be a generic line arrangement realizing the matroid $M_{7}$. We 
write $\cA_{1}=\cC_{0}\cup\cC_{1}$ where $\cC_{0}$ are the first
seven lines and $\cC_{1}$ are the seven last ones. By the combinatorics
of the matroid $M_{7}$ and the genericity assumption, the property $\cC_{1}=\L_{\{2\},\{3\}}(\cC_{0})$
holds, and -- that will be important for us -- the image of $\cC_{0}$
by the operator $\ldt$ has a natural labeling: for any $j\in\{1,\dots,7\}$,
the six line arrangement 
\begin{equation}
H_{j}=\sum_{k\in\{1,\dots,7\},\,k\neq j}\ell_{k}\label{eq:Hexagons-1}
\end{equation}
is such that the line arrangement $\ldt(H_{j})$ is a unique line
$\ell_{j}'$, moreover: 
\[
\cC_{1}=(\ell_{1}',\dots,\ell_{7}').
\]
 Since $\cC_{1}=\L_{\{2\},\{3\}}(\cC_{0})$, to shorten our notations, we will often
speak of $\cC_{0}$ as a realization of $M_{7}$ instead of $\cC_{0}\cup\cC_{1}$.

The singularities of $\cC_{0}$ (resp. $\cC_{1}$) are $21$ double
points. The $21$ singularities on $\cC_{0}$ become the triple points
on $\cC_{0}\cup\cC_{1}$, moreover $t_{2}(\cC_{0}\cup\cC_{1})=28$.

\subsubsection{Equation of the quartic surface $Z_7$ and realization space of $M_{7}$}

Consider $Z_7$, the quartic surface in $\PP^{3}$  given by the equation {\footnotesize
\begin{equation}
y_{1}^{2}y_{2}^{2}+y_{1}^{2}y_{2}y_{3}-y_{1}y_{2}^{2}y_{3}-y_{1}y_{2}y_{3}^{2}-y_{1}^{2}y_{2}y_{4}-y_{1}y_{2}^{2}y_{4}+y_{1}y_{2}y_{3}y_{4}-y_{2}y_{3}^{2}y_{4}+y_{1}y_{2}y_{4}^{2}+y_{3}^{2}y_{4}^{2}=0.\label{eq:quarti-Z}
\end{equation}
}The eight singularities of $Z_7$ are of type $4A_{1}+A_{2}+3A_{3},$
at the points respectively{\footnotesize
\[
\begin{array}{c}
s_{1}=(0:0:0:1),s_{2}=(1:0:0:1),s_{3}=(0:0:1:0),s_{4}=(1:0:1:0),\\
s_{5}=(0:1:0:0),s_{6}=(0:1:0:1),s_{7}=(1:-1:1:0),s_{8}=(1:0:0:0).
\end{array}
\]
}The minimal desingularization of $Z_7$ is a K3 surface which we denote
by $Z_7^{s}$. Let $x_{1},x_{2},x_{3}$ be the coordinates on the affine
chart $y_{4}\neq0$. For a generic point $x=(x_{1},x_{2},x_{3})$
on the surface $Z_7$ in the chart $y_{4}\neq0$, let us define the 
 labeled arrangement of seven lines $\co=\co(x)$ with normal vectors the points
$p_{1},\dots,p_{7}$ respectively defined by{\footnotesize{}
\begin{equation}
\begin{array}{c}
(1:0:0),(0:1:0),(0:0:1),(-1:1:1)\\
(-x_{1}x_{2}^{2}-x_{1}x_{2}x_{3}+x_{1}x_{2}-x_{2}x_{3}+x_{3}\,:\,x_{1}x_{2}+x_{1}x_{3}-x_{1}\,:\,x_{2}-1)\\
(-x_{1}x_{2}^{2}-x_{1}x_{2}x_{3}+x_{1}x_{2}-x_{2}x_{3}+x_{3}\,:\,x_{1}x_{2}+x_{1}x_{3}-x_{1}+x_{2}^{2}\\
+x_{2}x_{3}-2x_{2}-x_{3}+1\,:\,x_{2}^{2}+x_{2}x_{3}-x_{2}-x_{3})\\
(-x_{1}x_{2}^{2}-x_{1}x_{2}x_{3}+x_{1}x_{2}+x_{3}^{2}\,:\,x_{1}x_{2}+x_{1}x_{3}-x_{1}-x_{2}x_{3}\\
-x_{3}^{2}+x_{3}\,:\,x_{2}^{2}+x_{2}x_{3}-x_{2}-x_{3}).
\end{array}\label{eq:of-mathcal-Co}
\end{equation}
}Let us also define the lines arrangement $\cC_{1}=\cC_{1}(x)$ with
normal vectors{\footnotesize{}
\begin{equation}
\begin{array}{c}
(-x_{1}x_{2}^{2}-x_{1}x_{2}x_{3}+x_{1}x_{2}+x_{3}^{2}\,:\,x_{1}x_{2}^{2}+2x_{1}x_{2}x_{3}-x_{1}x_{2}+x_{1}x_{3}^{2}\\
-x_{1}x_{3}-x_{2}^{2}x_{3}-2x_{2}x_{3}^{2}+x_{2}x_{3}-x_{3}^{3}+x_{3}^{2}\,:\,x_{2}^{2}+x_{2}x_{3}-x_{2}-x_{3}),\\
(-x_{1}x_{2}-x_{1}x_{3}+x_{1}\,:\,x_{1}x_{2}+x_{1}x_{3}-x_{1}\,:\,x_{2}-1),(-x_{2}:1:0),\\
(-x_{1}x_{2}^{3}-2x_{1}x_{2}^{2}x_{3}+x_{1}x_{2}^{2}-x_{1}x_{2}x_{3}^{2}+x_{1}x_{2}x_{3}-x_{2}^{2}x_{3}-x_{2}x_{3}^{2}+x_{2}x_{3}\\
+x_{3}^{2}\,:\,x_{1}x_{2}+x_{1}x_{3}-x_{1}+x_{2}^{2}+x_{2}x_{3}-2x_{2}-x_{3}+1\,:\,x_{2}^{2}+x_{2}x_{3}-x_{2}-x_{3}),\\
(-x_{1}x_{2}^{2}-x_{1}x_{2}x_{3}+x_{1}x_{2}-x_{2}x_{3}+x_{3}\,:\,0\,:\,x_{2}^{2}+x_{2}x_{3}-x_{2}-x_{3}),\\
(-x_{2}^{2}-x_{2}x_{3}+x_{2}+x_{3}\,:\,x_{1}x_{2}+x_{1}x_{3}-x_{1}-x_{2}x_{3}-x_{3}^{2}+x_{3}\,:\,x_{2}^{2}+x_{2}x_{3}\\
-x_{2}-x_{3}),(0:1:1).
\end{array}\label{eq:L23(L7)=00003DM7}
\end{equation}
}
A computation in \texttt{OSCAR} yields the following concrete description of the moduli space $\cR_{7}=\cR(M_7)$.
\begin{prop}
The moduli space $\cR_{7}$ is an open sub-scheme of $Z_7$: for $x\in\cR_{7}$,
the line arrangement $\cA=\cC_{0}(x)\cup\cC_{1}(x)$ is a realization
of $M_{7}$, and conversely any realization of $M_{7}$ is projectively
equivalent to a unique such line arrangement. 
\end{prop}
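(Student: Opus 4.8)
\emph{Strategy.} The plan is to prove the two assertions simultaneously by first removing the $\mathrm{PGL}_3$-ambiguity through a normalization of the first four lines, and then verifying the matroid incidences by an explicit elimination; this is the content of the \texttt{OSCAR} computation. By the convention fixed in Subsection~\ref{sec:Matroids}, each subset of three of the first four atoms of $M_{7}$ is a basis, so the four normal vectors $p_{1},p_{2},p_{3},p_{4}$ of any realization are in general position. Hence there is a unique $\gamma\in\mathrm{PGL}_{3}$ carrying them to the standard frame $(1:0:0),(0:1:0),(0:0:1),(-1:1:1)$ appearing in \eqref{eq:of-mathcal-Co}. This simultaneously produces the canonical representative of each orbit $[\cA]$ (yielding the uniqueness clause) and identifies $\cR_{7}$ with the set of realizations whose first four lines are already normalized; it then remains to parametrize such realizations and to match them with the points of $Z_{7}$.

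The matroid $M_{7}$ prescribes exactly which triples $\{\ell_{a},\ell_{b},\ell_{c}'\}$ are concurrent, namely those with $a+b\equiv 2c \pmod 7$; equivalently, through $\cC_{1}=\ldt(\cC_{0})$, each line $\ell_{c}'$ must pass through the three double points $p_{ab}=\ell_{a}\cap\ell_{b}$ with $a+b\equiv 2c$. I would first impose those concurrences that involve the already-fixed lines $\ell_{1},\dots,\ell_{4}$; being of low degree, these solve rationally for the normal vectors $p_{5},p_{6},p_{7}$ in terms of three residual parameters $x_{1},x_{2},x_{3}$, and then pin down each $\ell_{c}'$ as the join of the relevant pair of double points. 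Carrying this out reproduces precisely the normal vectors displayed in \eqref{eq:of-mathcal-Co} and \eqref{eq:L23(L7)=00003DM7}, and checking that these formulas satisfy the incidences used to define them is a direct substitution.

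Not all prescribed concurrences are automatic for the parametrized arrangement: once the determining incidences are solved, the requirement that the remaining triples of double points be genuinely collinear reduces, after eliminating the auxiliary variables, to a single constraint on $(x_{1},x_{2},x_{3})$. The heart of the proof is the elimination showing that this constraint is exactly the quartic \eqref{eq:quarti-Z}; in ideal-theoretic terms, that the ideal generated by all circuit determinants in the normalized coordinates saturates to the principal ideal cut out by \eqref{eq:quarti-Z}. Consequently $\cC_{0}(x)\cup\cC_{1}(x)$ carries all the dependencies demanded by $M_{7}$ precisely when $x\in Z_{7}$. This step is the main obstacle: one must prove that the full system of circuit conditions collapses to the single quartic with no extraneous components and that the identification is scheme-theoretic, which is where the computer algebra is indispensable and where one saturates by the product of the basis minors to discard the degenerate strata.

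To conclude, I would verify that on a dense open subset of $Z_{7}$ no basis determinant of $\cC_{0}(x)\cup\cC_{1}(x)$ vanishes and no two of its lines coincide, so that $M(\cA(x))=M_{7}$ with no spurious incidences; since each basis minor is a polynomial on the irreducible surface $Z_{7}$, it suffices to exhibit one point of $Z_{7}$ at which all of them are nonzero. The complement of this locus is a proper closed subset, which is exactly why $\cR_{7}$ is an \emph{open} subscheme of $Z_{7}$ rather than all of it. Combined with the normalization above, the assignment $x\mapsto[\cA(x)]$ is then a bijection from this open subscheme onto the realizations of $M_{7}$ modulo $\mathrm{PGL}_{3}$, with inverse given by normalizing the frame, establishing both existence and uniqueness.
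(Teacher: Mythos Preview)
Your proposal is correct and follows essentially the same approach as the paper: the paper simply states that the result is obtained by a computation in \texttt{OSCAR}, and your write-up is a faithful elaboration of what that computation consists of (normalize the first four lines to the standard frame, solve the low-degree incidences to parametrize the remaining normal vectors, eliminate to obtain the quartic \eqref{eq:quarti-Z}, and saturate by the basis minors to identify the open locus). There is no alternative argument in the paper to compare against.
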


The complement of $\cR_{7}$ in $Z_7$ is the union of $20$ irreducible
curves described in Section \ref{subsec:Open-surface-R7}.  

From the definition of the matroid $M_{7},$ if $\cA=\cC_{0}\cup\cC_{1}$
is a realization of $M_{7}$, one has $\ldt(\cC_{0})=\cC_{1}$, but
the following result on $\cC_{2}=\ldt(\cC_{1})$ is unexpected:
\begin{thm}
\label{thm:L23-rational-self-map}Let $\cA_{0}=\cC_{0}\cup\cC_{1}$
be a generic realization of $M_{7}$ and define $\cC_{2}=\ldt(\cC_{1})$.
The labeled line arrangement $\cA_{1}=\cC_{1}\cup\cC_{2}$ is again
a realization of $M_{7}$. The operator $\ldt$ induces a rational
self-map on the schemes $\ksU$ of all realizations of $M_{7}$ and
its moduli space $\cR_{7}$. 
\end{thm}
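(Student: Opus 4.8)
\emph{Strategy.} The plan is to make the passage $\cC_{1}\mapsto\cC_{2}=\ldt(\cC_{1})$ completely explicit, starting from the parametrizations \eqref{eq:of-mathcal-Co} and \eqref{eq:L23(L7)=00003DM7} of $\cC_{0}(x)$ and $\cC_{1}(x)$, and then to check by direct computation that the labeled arrangement $\cA_{1}=\cC_{1}\cup\cC_{2}$ again carries the matroid $M_{7}$; once this is done, the induced maps follow formally. To build $\cC_{2}$ together with a labeling I would mimic \eqref{eq:Hexagons-1}: for each $j\in\{1,\dots,7\}$ form the six-line sub-arrangement $H_{j}'=\sum_{k\neq j}\ell_{k}'$ of $\cC_{1}$ and set $\ell_{j}''=\ldt(H_{j}')$. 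Since $\cC_{1}$ has only double points, $H_{j}'$ has $\binom{6}{2}=15$ of them, and the operator selects the unique collinear triple among these; writing each double point as the cross product $n_{a}\wedge n_{b}$ of the normal vectors of two lines of $\cC_{1}$, and $\ell_{j}''$ as the cross product of two such points, yields the normal vector of $\ell_{j}''$ as an explicit polynomial in the entries of \eqref{eq:L23(L7)=00003DM7}, hence as a rational function of $x$. This simultaneously produces the labeling $\cC_{2}=\ell_{1}''+\dots+\ell_{7}''$, identifies it with the global image $\ldt(\cC_{1})$, and shows that the latter is genuinely a seven-line arrangement for generic $x$.

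\emph{The matroid of $\cA_{1}$.} With $\cC_{1}\cup\cC_{2}$ available as fourteen explicit normal vectors, I would verify that, under the relabeling sending $\cC_{1}$ to the atoms $1,\dots,7$ and $\cC_{2}=\ell_{1}''+\dots+\ell_{7}''$ to $1',\dots,7'$, its associated matroid is $M_{7}$. Concretely this means checking that a $3\times3$ minor vanishes exactly on the prescribed dependent triples, i.e. that each of the $21$ double points of $\cC_{1}$ lies on the unique line of $\cC_{2}$ dictated by the congruence $a+b\equiv 2c\bmod 7$, while all remaining minors are generically nonzero and no further concurrence is forced. As ``all required minors nonzero'' is an open condition, it suffices to certify the identical vanishing of the prescribed minors symbolically and exhibit nonvanishing of the others at one generic rational point of $Z_{7}$, both carried out in \texttt{OSCAR}.

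\emph{The induced self-maps.} The assignment $\cC_{0}\cup\cC_{1}\mapsto\cC_{1}\cup\cC_{2}$ of the first step is given by polynomials on a dense open subscheme of $\ksU$, so $\ldt$ induces a rational self-map there. Descending to moduli, I would send $x\in\cR_{7}$ to the unique $x'\in\cR_{7}$ for which $\cC_{0}(x')\cup\cC_{1}(x')$ is projectively equivalent to $\cC_{1}(x)\cup\cC_{2}(x)$; existence and uniqueness of such $x'$ are furnished by the Proposition above identifying $\cR_{7}$ with an open subscheme of $Z_{7}$. This $x'=\lldt(x)$ is computed by applying the rational period map $\mathfrak{q}$ to the normal vectors constructed above, and a composition of rational maps is rational. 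Finally, the locus where $\ldt(\cC_{1})$ degenerates (too few or too many lines, or an unexpected concurrence) is closed, so $\lldt$ is defined on a dense open subset, as required.

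\emph{Main obstacle.} The genuine content is the second step: there is no formal reason why the $21$ double points of $\cC_{1}$ should again admit exactly seven collinear triples assembling into a copy of $M_{7}$, and this is precisely the ``unexpected'' phenomenon of the statement, reflecting the self-similar (ultimately ``multiplication by $-2$'') structure of the configuration. To keep the verification tractable I would exploit the order-$42$ Frobenius automorphism group of $M_{7}$ together with the symmetry exchanging $\{1,\dots,7\}$ with $\{1',\dots,7'\}$, which reduces the independent incidences to be tested to a handful of orbit representatives.
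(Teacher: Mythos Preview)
Your proposal is correct and follows essentially the same route as the paper's proof: work over the function field of $Z_{7}$ with the explicit parametrizations of $\cC_{0}(x)$ and $\cC_{1}(x)$, compute $\cC_{2}=\ldt(\cC_{1})$ together with its canonical labeling via the hexagons $H_{j}'$, verify by direct computer-algebra that the matroid of $\cC_{1}\cup\cC_{2}$ equals $M_{7}$, and then descend to $\cR_{7}$ via the period map $\mathfrak{q}$. The paper does exactly this (in Magma rather than \texttt{OSCAR}), with the additional remark that one checks $\lldt$ is dominant.

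One small correction to your final paragraph: there is no ``symmetry exchanging $\{1,\dots,7\}$ with $\{1',\dots,7'\}$'' in $\aut(M_{7})$. All $21$ non-bases of $M_{7}$ have the shape $\{a,b,c'\}$ with two unprimed and one primed atom, so the two blocks play asymmetric roles; geometrically, the $21$ double points of $\cC_{0}$ become the triple points of $\cC_{0}\cup\cC_{1}$, while the $21$ double points of $\cC_{1}$ remain double. The Frobenius group $F_{7}$ alone (acting diagonally on both blocks) still cuts the incidence checks down to a single orbit, so your reduction idea survives, but drop the swap.
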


We denote by $\lldt:Z_7\dashrightarrow Z_7$ the rational self-map
on $Z_7$ induced by $\ldt$.   
\begin{proof}
Up to projective automorphism, one can suppose that the line arrangement
$\cA_{0}$ is of the form $\cA_{0}=\cC_{0}(x)\cup\cC_{1}(x)$ for
$x$ generic in $Z_7$: concretely, we use $x=(x_{1},x_{2},x_{3})$, where $x_{1},x_{2},x_{3}\in\CC(Z_7)$
are considered as rational functions.  
A direct computation (with Magma) then shows
that $\cC_{2}=\ldt(\cC_{1})$ is a line arrangement of seven lines. It has a
canonical labeling as described in the previous Subsection 
and we then check that the matroid associated to
$\cC_{1}\cup\cC_{2}$ is equal to $M_{7}$, so that $\cC_{1}\cup\cC_{2}$
is a realization of $M_{7}$. Using the period map, one computes 
$\lldt$ and obtain that it is a dominant rational map. The reader can find the polynomials defining 
$\lldt$ in an ancillary file of this paper on arXiv; it can be also retrieved from 
the polynomials given in Section \ref{subsec:Practical-SemiConj}.
That describes action
of $\ldt$ on the space of realization $\ksU$ and on the moduli space
$\cR_{7}$.
\end{proof}

\subsection{\label{subsec:Open-surface-R7}The open surface $\protect\cR_{7}$
inside $Z_7$}

The scheme $Z_7\setminus\cR_{7}$ is the union of the following curves:

\begin{itemize}
\item The $12$ lines 
\begin{alignat*}{9}
&L_{1}: y_{2}=y_{3}=0, 								&&L_{2}:y_{1}=y_{3}=0,							&&L_{3}:y_{2}=y_{4}=0,\\
&L_{4}:y_{1}-y_{3}=y_{4}=0,						&&L_{5}: y_{1}=y_{4}=0,							&&L_{6}:y_{2}-y_{4}=y_{3}=0,\\
&L_{7}:y_{1}-y_{3}-y_{4}=y_{2}+y_{3}=0,\,	&&L_{8}: y_{1}-y_{3}=y_{2}+y_{3}=0,	&&L_{9}:y_{2}+y_{3}=y_{4}=0,\\
&L_{10}:y_{1}-y_{4}=y_{3}=0,					&&L_{11}: y_{1}-y_{3}=y_{2}-y_{4}=0,\,	&&L_{12}:y_{1}=y_{2}-y_{4}=0.
\end{alignat*}
These lines are also the lines contained in the quartic surface $Z_7$
that contain at least two double points of $Z_7$.

\item  The conic $C_{o}$ defined by $y_{1}y_{3}-y_{3}^{2}-y_{1}y_{4}=y_{2}+y_{3}-y_{4}=0$. 

\item  Seven curves $E_{1},\dots,E_{7}$ of geometric genus one.
For example, one of these curves is given by
\[
y_{1}^{2}-2y_{1}y_{3}+y_{3}^{2}-y_{1}y_{4}=y_{2}^{2}+y_{2}y_{3}+y_{1}y_{4}-y_{3}y_{4}-y_{4}^{2}=0.
\]
\end{itemize}
The $j$-invariant of the normalizations of the curves $E_{i}$ is equal to 
 $-5^{6}/28$. The elliptic curve with this $j$-invariant is known
as the modular curve $X_{1}(14)$ parametrizing pairs $(E,t)$ where
$E$ is an elliptic curve and $t$ is an order $14$ torsion element
of $E$. For a generic point $p$ on the curves $E_{1},\dots,E_{7}$,
the line arrangement $\cC_{0}(p)$ with normal vectors as in \eqref{eq:of-mathcal-Co}
is well-defined. The line arrangement $\cC_{1}=\ldt(\cC_{0})$ has
seven lines, but its singularities are $t_{2}=6,t_{3}=5$, and one has
$\ldt(\cC_{1})=\emptyset$.
Moreover, the singularities of $\cC_{0}\cup\cC_{1}$
are $t_{2}=13,t_{3}=26$.

The image of the curves $C_{o},E_{1},\dots,E_{7}$ under the map $\lldt$ are
lines $L_{k}$; when defined, the image of the lines $L_{k}$ are
lines $L_{k'}$ or points.

\subsection{\label{subsec:Degree-L23}The degree of $\protect\lldt$}

Recall that $\lldt:Z\dashrightarrow Z$ denotes the action of the
operator $\ldt$ on the K3 surface $Z_7$. One has:
\begin{thm}
\label{thm:action-L23-auto} The operator $\lldt$ acts on $Z_7$ as
a degree $4$ rational self-map.
\end{thm}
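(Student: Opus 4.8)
The plan is to read off the topological degree of $\lldt$ directly from the explicit rational map produced in Theorem~\ref{thm:L23-rational-self-map}, exploiting the fact that for a dominant rational self-map of a surface the degree equals the number of points in a generic fibre, equivalently the index $[\CC(Z_7):\lldt^{*}\CC(Z_7)]$ of function fields. Since $Z_7$ is two-dimensional and $\lldt$ was computed via the period map $\mathfrak{q}$ as a concrete rational map in the affine coordinates $x=(x_{1},x_{2},x_{3})$ on the chart $y_{4}\neq 0$, both quantities are accessible to a Gr\"obner basis computation in \texttt{Magma} or \texttt{OSCAR}.

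Concretely, I would first fix a generic target point $q=(q_{1},q_{2},q_{3})\in Z_7$ --- for definiteness one with coordinates in a purely transcendental extension of $\QQ$, or a sufficiently random rational point avoiding all the special loci below. I then form the ideal generated by the numerators of $\lldt(x)-q$ together with the quartic equation~\eqref{eq:quarti-Z} of $Z_7$, saturate by the denominators and by the indeterminacy locus of $\lldt$, and compute the resulting zero-dimensional scheme by elimination. The claim is that this scheme has length $4$ and is reduced, so that the generic fibre $\lldt^{-1}(q)$ consists of exactly four distinct points of $Z_7$; equivalently, eliminating the remaining coordinates exhibits a primitive element of $\CC(Z_7)$ satisfying a degree-$4$ minimal polynomial over $\lldt^{*}\CC(Z_7)=\CC(q_{1},q_{2},q_{3})$, giving $[\CC(Z_7):\lldt^{*}\CC(Z_7)]=4$.

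The step I expect to be the main obstacle is the bookkeeping of genericity and of the base locus, i.e.\ making sure that the integer $4$ really is the topological degree and is not contaminated by spurious solutions. Two things must be controlled. First, the naive solution count coming from the degrees of the defining polynomials of $\lldt$ (viewed on $\PP^{3}$) is much larger than $4$; the excess must be absorbed by saturating against the indeterminacy locus and against the components of $Z_7\setminus\cR_{7}$ listed in Subsection~\ref{subsec:Open-surface-R7} --- in particular the conic $C_{o}$ and the genus-one curves $E_{1},\dots,E_{7}$, whose images under $\lldt$ are among the lines $L_{k}$ and which therefore cannot meet a generic fibre. Second, I must check that $q$ lies outside the branch locus of the projection used in the elimination and outside the image of the indeterminacy points, so that all four preimages are simple; this is what upgrades ``length-$4$ scheme'' to ``four reduced points'', and hence to topological degree exactly $4$.

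Finally, as a consistency check one can interpret the count arrangement-theoretically: a point of $\lldt^{-1}([\cC_{1}])$ is the class of a realization $\cC_{0}$ of $M_{7}$ with $\ldt(\cC_{0})=\cC_{1}$, so the assertion is that a generic such $\cC_{1}$ is the $\L_{\{2\},\{3\}}$-image of exactly four heptagonal realizations up to $\mathrm{PGL}_{3}$. This agrees with the statement recalled in the introduction that, under the identification of $\cR_{7}$ with an open subset of $\Xi_{1}(7)$, the map $\lldt$ becomes multiplication by $-2$ on the fibres of the elliptic fibration, a map of degree $(-2)^{2}=4$; but the computation above is meant to establish the degree independently of that identification.
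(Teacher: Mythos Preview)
Your proposal is a valid strategy and would yield the correct answer, but it differs from the route taken in the paper. The paper does not count fibres directly. Instead it exploits that $Z_7^{s}$ is a K3 surface: one fixes a nowhere-vanishing holomorphic $2$-form $\omega$ (concretely $\omega=\dfrac{dx_{2}\wedge dx_{3}}{\partial f/\partial x_{1}}$ on the chart $y_{4}\neq 0$), computes by direct substitution that $\lldt^{*}\omega=-2\,\omega$, and then invokes the principle (as in Voisin's work) that for a rational self-map of a K3 surface the topological degree equals the factor by which the volume form $\omega\wedge\bar\omega$ is scaled, here $|-2|^{2}=4$.

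Compared with your fibre-counting approach, the $2$-form method avoids all the saturation and genericity bookkeeping you rightly flag as the main obstacle: there is no need to identify the indeterminacy locus or to certify that a chosen target point is generic, since a single algebraic identity $\lldt^{*}\omega=-2\,\omega$ suffices. It also yields the scalar $-2$ itself, which the paper reuses later when identifying $\lldt$ with the multiplication-by-$(-2)$ map on the elliptic fibration. Conversely, your approach has the merit of being completely general---it does not rely on the K3 structure and would work verbatim on any surface---and it produces the actual preimage configuration, which is closer to the arrangement-theoretic interpretation you mention at the end. Both routes are ultimately computational, but the paper's is the shorter and more structural one in this setting.
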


In order to prove \Cref{thm:action-L23-auto}, let us describe the
period map: \\
Let $\ell_{1},...,\ell_{7}$ be the lines of $\cC_{0}$ with normal
vectors as in Equation \eqref{eq:of-mathcal-Co}. Let us denote by $p_{i,j}$
the intersection point of the lines $\ell_{i}$ and $\ell_{j}$. The
point $p_{5,7}$ is $(1:x_{2}:x_{3})$, so that one may recover $x_{2},x_{3}$
from the knowledge of that point. Also the point $p_{1,7}$ is 
\begin{equation}
(0:-x_{1}x_{2}{}^{2}-x_{1}x_{2}x_{3}+x_{1}x_{2}+x_{2}{}^{2}-x_{2}:x_{2}x_{3}+x_{3}{}^{2}-x_{3}),\label{eq:normalLine17}
\end{equation}
this is linear in $x_{1}$, so that from the knowledge of $p_{5,7}$
and $p_{1,7}$, one may recover the point $(x_{1},x_{2},x_{3})\in Z_7$. 
\begin{proof}[Proof of \Cref{thm:action-L23-auto}]
Let $A\in PGL_{3}(\CC)$ be the projective transformation that sends
the first four lines of $\cC_{1}$ to the four lines having the 
same normal vectors as the one of $\cC_{0}$. Let $\cC_{1}'=(\ell_{1}',\dots,\ell_{7}')$
be the image of $\cC_{1}$ by $A$. Using the period map, one can determine
the points $p_{5,7}'$ and $p_{1,7}'$ and we obtain a point $x'=(x_{1}',x_{2}',x_{3}')$
(in the function field of $Z_7$). The line arrangements $\cC_{0}(x_{1}',x_{2}',x_{3}')$
and $\cC_{1}'$ are equal, and the action of $\ldt$ on $Z_7$ is through
the map 
\[
\lldt:(x_{1},x_{2},x_{3})\to(x_{1}',x_{2}',x_{3}').
\]
The rational self-map $\lldt:Z_7\dashrightarrow Z_7$ is studied in Section
\ref{subsec:Practical-SemiConj}. 

Let us compute the degree of $\lldt$; we apply the method from \cite{Voisin}.
Let $f(x_{1},x_{2},x_{3})$ be the equation of the quartic $Z_7$ in the
chart $U_{4}:y_{4}\neq0$. The space of global non-vanishing differential
$2$-forms is generated by a form $\o$, which one can choose so that
on an open set of $U_{4}$ one has: 
\[
\omega=\frac{dx_{2}\wedge dx_{3}}{\partial f/\partial x_{1}}.
\]
The rational self-map $\lldt$ preserves $U_{4}$, and by a direct
computation one obtains that 
\[
\lldt^{*}\o=-2\o.
\]
 The above expression shows that when applying $\lldt$, the volume
form $\om\bar{\om}$ is multiplied by $4$, which gives the degree
of $\lldt$.
\end{proof}

\subsection{\label{subsec:The-automorphism-group}Action of $\protect\aut(M_{7})$
on the K3 surface $Z_7^{s}$}

The automorphism group of $M_{7}$ is generated by the order $7$
and $6$ permutations {\small{}
\[
\s_{1}=(1,7,4,3,6,5,2)(8,14,11,10,13,12,9)\text{ and }\s_{2}=(1,3,5,6,7,2)(8,10,12,13,14,9).
\]
}with $1'=8,\dots,7'=14$. 
This group is isomorphic to the Frobenius group $F_{7}=\ZZ/6\ZZ\rtimes\ZZ/7\ZZ$.
These automorphisms act on the K3 surface $Z$. 
\begin{prop}
The action of $\aut(M_{7})$ on $Z_7$ is faithful.
\end{prop}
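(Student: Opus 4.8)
The plan is to show that the homomorphism $\rho:\aut(M_{7})\to\aut(Z_{7}^{s})$ has trivial kernel, i.e. that no nontrivial element of $F_{7}=\ZZ/6\ZZ\rtimes\ZZ/7\ZZ$ acts as the identity on $Z_7$. Since $F_7$ is generated by $\sigma_1$ (order $7$) and $\sigma_2$ (order $6$), and since the kernel of $\rho$ is a normal subgroup of $F_7$, I would first recall the normal subgroup structure: the only proper nontrivial normal subgroup of the Frobenius group $F_7$ is the cyclic group $\ZZ/7\ZZ$ generated by $\sigma_1$. Hence the kernel of $\rho$ is either trivial, all of $\ZZ/7\ZZ$, or all of $F_7$. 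To rule out the two nontrivial possibilities, it therefore suffices to exhibit \emph{one} element of order $7$ (namely $\sigma_1$, since if $\sigma_1\in\ker\rho$ then $\ZZ/7\ZZ\subseteq\ker\rho$) that acts nontrivially on $Z_7$: this single computation kills every nontrivial normal subgroup at once.

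The key step is to make the action of $\sigma_1$ on $Z_7$ explicit. Each automorphism $\sigma\in\aut(M_7)$ permutes the $14$ atoms, hence permutes the lines $\ell_1,\dots,\ell_7,\ell_1',\dots,\ell_7'$ of a realization $\cA=\cC_0\cup\cC_1$. Relabeling a realization by $\sigma$ produces a new realization of $M_7$, which by the Proposition is projectively equivalent to a unique canonical arrangement $\cC_0(x')\cup\cC_1(x')$ for some $x'=(x_1',x_2',x_3')\in Z_7$; this defines the induced rational self-map $x\mapsto x'$ of $Z_7$. Concretely I would take the generic canonical arrangement $\cC_0(x)$ from \eqref{eq:of-mathcal-Co}, apply the permutation $\sigma_1=(1,7,4,3,6,5,2)(\dots)$ to reorder its seven normal vectors, bring the (new) first four lines back to the canonical basis $(1:0:0),(0:1:0),(0:0:1),(-1:1:1)$ by the unique $A\in PGL_3$ doing so, and then read off $(x_1',x_2',x_3')$ from the period map exactly as in the proof of \Cref{thm:action-L23-auto}: the points $p_{5,7}'=(1:x_2':x_3')$ and the line through $p_{1,7}'$ recover $x'$ as rational functions on $Z_7$.

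With the formula for $x\mapsto x'$ in hand, faithfulness of $\sigma_1$ follows by verifying that this map is not the identity on $Z_7$ — for instance by checking that $x'\neq x$ at a single explicit generic point of $Z_7$, or equivalently that the associated rational functions $x_i'$ do not coincide with $x_i$ in $\CC(Z_7)$. Since $\sigma_1$ has order $7$ and generates the minimal normal subgroup, this shows $\ker\rho$ contains no order-$7$ element, hence $\ker\rho\cap(\ZZ/7\ZZ)=\{e\}$, which by the normal-subgroup classification forces $\ker\rho=\{e\}$ and establishes that the action is faithful.

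The main obstacle I anticipate is purely computational rather than conceptual: producing the permuted-and-renormalized arrangement and extracting $x'$ via the period map involves the same heavy polynomial manipulations used to compute $\lldt$, and one must be careful that the chosen test point is generic (lies in $\cR_7$, avoiding the $20$ excluded curves of Section \ref{subsec:Open-surface-R7} and the eight singular points of $Z_7$) so that the period-map identification is valid there. This is precisely the kind of step the paper delegates to Magma/\texttt{OSCAR}, so in practice I would verify nontriviality of the $\sigma_1$-action by a direct machine computation at a rational sample point, which sidesteps the need to simplify the full rational-function expressions by hand.
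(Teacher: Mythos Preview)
Your approach is essentially correct and follows the same strategy as the paper (realise the action via the period map, then verify nontriviality at a sample point), but there is a small group-theoretic slip you should fix. The Frobenius group $F_7=\ZZ/7\ZZ\rtimes\ZZ/6\ZZ$ has \emph{more} than one proper nontrivial normal subgroup: besides $\ZZ/7\ZZ$ there are the preimages of the subgroups of $\ZZ/6\ZZ$, giving normal subgroups of orders $14$ and $21$. So your sentence ``the kernel of $\rho$ is either trivial, all of $\ZZ/7\ZZ$, or all of $F_7$'' is not correct as written. What \emph{is} true, and what your argument actually needs, is that $\ZZ/7\ZZ$ is the unique \emph{minimal} nontrivial normal subgroup (since the complement acts faithfully on the kernel, any normal subgroup meeting $\ZZ/7\ZZ$ trivially centralizes it and is therefore trivial). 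Hence every nontrivial normal subgroup contains $\sigma_1$, and showing $\sigma_1$ acts nontrivially suffices. You in fact invoke exactly this in your last paragraph, so the logic is sound once the misstatement is corrected.

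For comparison, the paper does not use this normal-subgroup reduction: it computes the actions of both generators $\sigma_1$ and $\sigma_2$ explicitly and then checks directly that the orbit of a single point, namely $(-6:-25/8:5:1)\in Z_7$, has $42$ elements. Your reduction is a little more economical (one generator instead of two, and no full orbit to enumerate), while the paper's version has the side benefit of producing the explicit formulas for $\sigma_1,\sigma_2$ that are used later in the section (fixed loci, commutation with $\lldt$, quotient surface).
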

\begin{proof}
As in the proof of Theorem \ref{thm:action-L23-auto}, let $\cC_{0}=\cC_{0}(x_{1},x_{2},x_{3})$
be the generic line arrangement in $\cR_{7}$, where $x_{1},x_{2},x_{3}\in\CC(Z_7)$
are considered as rational functions. 

For $\s\in\aut(M_{7}),$ let $\cC_{0}^{\s}$ be the image of $\cC_{0}$
under the action of $\s$ (that is just the permutation of the lines
under $\s$). We apply the period map to the line arrangement $\cC_{0}^{\s}$,
where $\cC_{0}=\cC_{0}(x)$. Using the period map, we obtain the point
$\s(x)=(x_{1}',x_{2}',x_{3}')$ which is a zero of the equation of
$Z_7$ and such that $\cC_{0}(\s(x))$ is projectively equivalent to
$\cC_{0}^{\s}$.

When $\s=\s_{1}$, the automorphism $\s_{1}$ acts on $Z_7$ through
the map in $\PP^{3}$ given by the ring homomorphism which to $(y_{1},y_{2},y_{3},y_{4})$
associates{\small{}
\[
\begin{array}{c}
(y_{1}y_{2}^{2}y_{3}+y_{1}y_{2}y_{3}^{2}-y_{2}^{2}y_{3}^{2}-y_{2}y_{3}^{3}-y_{1}y_{2}y_{3}y_{4}-y_{2}^{2}y_{3}y_{4}+y_{2}y_{3}y_{4}^{2}+y_{3}^{2}y_{4}^{2},\\
y_{1}y_{2}^{3}+y_{1}y_{2}^{2}y_{3}+y_{2}^{2}y_{3}^{2}+y_{2}y_{3}^{3}-2y_{1}y_{2}^{2}y_{4}-y_{1}y_{2}y_{3}y_{4}-2y_{2}y_{3}^{2}y_{4}-y_{3}^{3}y_{4}+y_{1}y_{2}y_{4}^{2}+y_{3}^{2}y_{4}^{2},\\
y_{1}y_{2}^{2}y_{3}+y_{1}y_{2}y_{3}^{2}-y_{2}^{2}y_{3}^{2}-y_{2}y_{3}^{3}-y_{1}y_{2}y_{3}y_{4}+y_{2}y_{3}^{2}y_{4},\\
y_{2}^{3}y_{3}+2y_{2}^{2}y_{3}^{2}+y_{2}y_{3}^{3}-2y_{2}^{2}y_{3}y_{4}-3y_{2}y_{3}^{2}y_{4}-y_{3}^{3}y_{4}+y_{2}y_{3}y_{4}^{2}+y_{3}^{2}y_{4}^{2}).
\end{array}
\]
}For $\s_{2}$, we obtain that it acts on the surface $Z_7$ through
the map which to $(y_{1},y_{2},y_{3},y_{4})$ associates 
\[
\begin{array}{c}
(-y_{2}^{2}y_{3}-y_{2}y_{3}^{2}+y_{2}y_{3}y_{4},-y_{1}y_{2}y_{3}+y_{2}y_{3}^{2}+y_{2}y_{3}y_{4}-y_{3}y_{4}^{2},\\
y_{1}y_{2}^{2}+y_{1}y_{2}y_{3}-y_{2}^{2}y_{3}-y_{2}y_{3}^{2}-y_{1}y_{2}y_{4}+y_{2}y_{3}y_{4},y_{2}y_{3}y_{4}-y_{3}y_{4}^{2});
\end{array}
\]
thas map is a birational transformation of $\PP^{3}$.
In order
to check that the action of $\aut(M_{7})$ is faithful on $Z_7$, it
is then enough to check that the orbit of one point (for example the
point $(-6:-25/8:5:1)$ in $Z_7$) has $42$ elements, which is a direct
computation.

The fixed points under the order seven element $\s_{1}$ are the singularities
$s_{5},s_{7},s_{8}$; (there is a unique conjugacy class of elements
of order $7$ in $F_{7}$). 

The fixed points locus of $\s_{2}$ and the order $3$ automorphism
$\s_{2}^{2}$ acting on $Z_7$ are: 
\begin{enumerate}[(i)]
\item The $A_{3}$ singularity $(0:1:0:1)$,
\item The four points $p=(r^{2}+1:r{}^{2}-r+2:r:1)$ where $r$ is any
complex root of $X^{4}-X^{3}+3X^{2}-X+1$. These points are in $Z_7\setminus\cR_{7}$;
they are periodic of period $2$ for the rational self-map $\lambda_{\{2\},\{3\}}$,
moreover the (unlabeled) line arrangements $\cC_{0}(p),\cC_{1}=\ldt(\cC_{0}),\cC_{2}=\ldt(\cC_{1})$
have $7,10$ and $37$ lines respectively. It seems likely that the
number of lines of the sequence $\cC_{n+1}=\ldt(\cC_{n})$ goes to
infinity.
\item The points $(w+1:-w:w:1)$ where $w^{2}+w+1=0$, which are fixed
by the rational self-map $\lambda_{\{2\},\{3\}}$; these two points are
in $Z_7\setminus\cR_{7}$. 
\end{enumerate}
The fixed-point locus of the involution $\s_{2}^{3}$ acting on $Z_7$
is the union of the line $L_{7}$ and a curve $E_{j}$, which is in
$Z_7\setminus\cR_{7}$ (see Section \ref{subsec:Open-surface-R7}).
There is a unique conjugacy class of involutions in $F_{7}$, so that
similarly, any involution from $\aut(M_{7})$ fixes a curve and a
line.
\end{proof}

There is an open set in the quotient surface $Z_7/\aut(M_{7})$ which
parametrizes unlabeled line arrangements $\cC_{0}^{o}$ associated
to $\cC_{0}$ in $\cR_{7}$. One has:
\begin{cor}
The surface $Z_7/\aut(M_{7})$ is rational.
\end{cor}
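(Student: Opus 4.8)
The statement to prove is that the quotient surface $Z_7/\aut(M_7)$ is a rational surface. The plan is to exploit the fact, established in the preceding results, that $Z_7$ is (birational to) the elliptic modular surface $\Xi_1(7)$ equipped with its elliptic fibration $\Xi_1(7)\to X_1(7)$, together with an explicit understanding of how the Frobenius group $F_7=\ZZ/6\ZZ\rtimes\ZZ/7\ZZ$ acts. The key observation is that rationality of a surface is a birational invariant, so it suffices to produce a birational model of the quotient that is manifestly rational; by Castelnuovo's criterion it is enough to check that the quotient has $p_g=q=0$, but in practice an explicit birational parametrization is cleaner and more convincing.

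The approach I would take proceeds in three steps. First, I would use the explicit formulas for the action of the generators $\s_1,\s_2$ on $Z_7\subset\PP^3$ given in the proof of faithfulness above. Since the coordinate ring is known and the group is small (order $42$), one can compute the ring of invariants $\CC[y_1,y_2,y_3,y_4]^{\aut(M_7)}$, or rather the field of invariants $\CC(Z_7)^{\aut(M_7)}$, directly. The quotient $Z_7/\aut(M_7)$ is the surface whose function field is precisely this invariant field. Second, I would search for two algebraically independent invariant rational functions $u,v\in\CC(Z_7)^{\aut(M_7)}$ that generate the invariant field, i.e.\ $\CC(Z_7)^{\aut(M_7)}=\CC(u,v)$; exhibiting such $u,v$ with the map $(u,v)$ being birational onto $\PP^2$ establishes rationality immediately. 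A natural source of invariants is symmetric functions of the $j$-invariants or cross-ratios attached to the configuration, or invariants built from the elliptic fibration structure.

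Alternatively, and this is the route I expect to be most robust, I would argue via the fibration. The group $F_7$ acts on $X_1(7)$, which is itself a rational curve (indeed $X_1(7)\cong\PP^1$), and the quotient $X_1(7)/\aut(M_7)$ is again a rational curve $\PP^1$. The quotient $Z_7/\aut(M_7)$ then fibers over this base $\PP^1$ with generic fiber a curve; one shows that the generic fiber is rational (e.g.\ of genus $0$, since passing to the quotient by the appropriate subgroup of the fiberwise action collapses the elliptic fiber to a rational curve, or because the generic stabilizer structure forces the fiber quotient to have genus zero). A surface fibered over $\PP^1$ in rational curves with a rational (or even just a multisection giving a section after base change) is rational by the theory of conic bundles and Tsen's theorem. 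The main obstacle, and the step deserving the most care, is precisely the verification that the generic fiber of $Z_7/\aut(M_7)\to\PP^1$ has geometric genus zero: one must track how the stabilizers act fiberwise and confirm that the quotient of the generic elliptic fiber $E$ by its induced group action is $\PP^1$. Here the fixed-point analysis already carried out in the faithfulness proposition is decisive: the order-$3$ and order-$2$ elements have the fixed loci described there, and from the Riemann–Hurwitz formula applied fiberwise one reads off that the quotient of $E$ by the fiberwise group has genus $0$. Once the generic fiber is a $\PP^1$ and the base is a $\PP^1$, rationality of the total quotient follows, and I would finish by confirming $p_g\bigl(Z_7/\aut(M_7)\bigr)=0$ as an independent sanity check through the invariant-theoretic computation in \texttt{OSCAR}.
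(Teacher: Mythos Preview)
Your two strategies are both workable in principle but far more laborious than what the paper does, and each leaves a step unverified. The paper's proof is a single sentence that uses a fact already recorded in the preceding proposition: the involution $\sigma_2^3\in\aut(M_7)$ has one-dimensional fixed locus (the line $L_7$ together with one of the curves $E_j$). On a K3 surface a symplectic involution has only isolated fixed points (eight of them, by Nikulin), so an involution fixing a curve is automatically \emph{non-symplectic}. Once the group contains a non-symplectic element, $H^{2,0}(Z_7^s)^{\aut(M_7)}=0$, and standard K3 theory (the reference is \cite{Huybrechts}) gives that the quotient of a K3 by a finite group containing a non-symplectic involution with nonempty fixed locus is rational; the Enriques alternative is excluded precisely because the action is not free. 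That is the entire argument.

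By contrast, your fibration route presupposes that $\aut(M_7)$ preserves the specific elliptic fibration $\gamma:Z_7\to\PP^1$ obtained by projecting from the line $L_6$. Nothing in the paper establishes this, and since the group visibly permutes the twelve lines $L_k$ nontrivially, equivariance of $\gamma$ is not automatic; without it your Riemann--Hurwitz computation on the generic fibre does not get off the ground. Your invariant-field computation would succeed if carried out, but it is exactly the kind of work the non-symplectic criterion is designed to avoid. One small correction as well: Castelnuovo's criterion is $q=P_2=0$, not $p_g=q=0$; an Enriques surface has $p_g=q=0$ but $P_2=1$, so $p_g=q=0$ alone does not suffice, and ruling out the Enriques case is precisely where the fixed-curve information enters.
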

\begin{proof}
Since an involution of $\aut(M_{7})$ fixes a one dimensional curve,
it is non-symplectic (see \cite{Huybrechts}), thus $Z_7/\aut(M_{7})$ is rational. 
\end{proof}
For a labeled line arrangement $\cC_{0}=(\ell_{1},\dots,\ell_{k})\in\ksU$
and $j\in\{1,\dots,7\}$, let us denote by $H_{j}(\cC_{0})$ the line
arrangement $H_{j}=\sum_{k\neq j}\ell_{k}$. The labeled line arrangement
$\cC_{1}=\ldt(\cC_{0})$ is 
\[
\cC_{1}=(\ldt(H_{1}),\dots,\ldt(H_{7})).
\]
An element $\s\in\aut(M_{7})$ permutes the lines of $\cC_{0}$: it
can also be seen as a permutation of $\{1,\dots,7\}$. The $\s(j)^{th}$
line of $\s.\cC_{1}$ is $\ldt(H_{\s(j)}(\cC_{0}))$. Since
\[
H_{\s(j)}(\cC_{0})=\sum_{k\neq j}\ell_{\s(k)}=H_{j}(\s.\cC_{0}),
\]
the $\s(j)^{th}$ line of $\s.\cC_{1}$ is $\ldt(H_{j}(\s.\cC_{0}))$.
Thus 
\[
\s.\ldt(\cC_{0})=(\ldt(H_{\s(1)}),\dots,\ldt(H_{\s(7)}))=\ldt(\s.\cC_{0})
\]
and we obtain that:
\begin{prop}
The action of $\aut(M_{7})$ commutes with the action of $\ldt$, that is for all $\s\in\aut(M_{7})$ it holds that
\[
\ldt\circ\s=\s\circ\ldt.
\]
\end{prop}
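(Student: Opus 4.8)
The plan is to prove the identity at the level of labeled line arrangements in $\ksU$, from which the commutation descends automatically to the moduli space $\cR_{7}$ by $PGL_{3}$-equivariance of $\ldt$. The crucial observation is that the operator $\ldt$ is intrinsic to a line arrangement viewed as an \emph{unlabeled} set of lines: it selects the lines of $\PP^{2}$ carrying exactly three double points, a condition making no reference to any ordering. Consequently, for any relabeling $\s$ the underlying sets of lines of $\ldt(\s.\cC_{0})$ and of $\s.\ldt(\cC_{0})$ coincide for free, and the entire content of the statement reduces to checking that the two \emph{natural labelings} agree entry by entry.

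First I would recall the labeling convention fixed above: writing $\cC_{0}=(\ell_{1},\dots,\ell_{7})$ and $H_{j}=\sum_{k\neq j}\ell_{k}$, the $j$-th line of $\cC_{1}=\ldt(\cC_{0})$ is by definition the unique line $\ldt(H_{j})$, so the labeling of the output is dictated, line by line, by which input line was deleted. Next I would fix $\s\in\aut(M_{7})$, regarded as a permutation of $\{1,\dots,7\}$ via its action on the first seven atoms, and compute both composites slot by slot. The key combinatorial identity is
\[
H_{j}(\s.\cC_{0})=\sum_{k\neq j}\ell_{\s(k)}=\sum_{m\neq \s(j)}\ell_{m}=H_{\s(j)}(\cC_{0}),
\]
which simply records that deleting the $j$-th line of the relabeled arrangement is the same as deleting the $\s(j)$-th line of the original. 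Applying $\ldt$ then shows that the $j$-th line of $\ldt(\s.\cC_{0})$ is $\ldt(H_{\s(j)})$, which is by definition the $\s(j)$-th line of $\cC_{1}$, hence the $j$-th line of $\s.\cC_{1}=\s.\ldt(\cC_{0})$. As this holds for every $j$, the two labeled arrangements coincide, giving $\ldt\circ\s=\s\circ\ldt$.

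The argument is essentially bookkeeping, so the only step requiring care is index discipline: one must fix once and for all whether $\s.\cC_{0}$ carries $\ell_{\s(k)}$ or $\ell_{\s^{-1}(k)}$ in its $k$-th slot, and then read off the $j$-th line of $\s.\cC_{1}$ with the same convention. With a consistent choice, the identity $H_{j}(\s.\cC_{0})=H_{\s(j)}(\cC_{0})$ carries the whole proof, and no geometry beyond the intrinsic nature of $\ldt$ and the well-definedness of the labeling $\ell_{j}'=\ldt(H_{j})$ — both established above — is needed. In particular, no computation on the quartic $Z_{7}$ is required, which is the main reason this commutation is clean despite the explicit polynomial description of $\lldt$ used elsewhere.
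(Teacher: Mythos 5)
Your proposal is correct and follows essentially the same route as the paper: both arguments rest on the labeling convention $\ell_{j}'=\ldt(H_{j})$ and the single bookkeeping identity $H_{j}(\s.\cC_{0})=\sum_{k\neq j}\ell_{\s(k)}=H_{\s(j)}(\cC_{0})$, from which the slot-by-slot comparison of $\ldt(\s.\cC_{0})$ and $\s.\ldt(\cC_{0})$ is immediate. Your added remarks on index conventions and on the intrinsic (label-free) nature of $\ldt$ are accurate but do not change the substance of the argument.
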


\begin{rem}
The group $\aut(M_{7})$ acts faithfully on the surface $Z_7\subset\PP^{3}$,
but does not extend canonically to a well-defined action on the ambient
space $\PP^{3}$. For example, the action of $\s_{2}$ we computed
is the restriction of an order $6$ birational map $\tilde{\s_{2}}$
of $\PP^{3}$; in particular $\tilde{\s_{2}}^{3}$ is a birational
involution of $\PP^{3}$ defined by degree $5$ coprime polynomials.
If instead one starts with $\s_{2}^{3}=(1,6)(2,5)(3,7)(8,13)(9,12)(10,14)$
and computes the action of $\widetilde{\s_{2}^{3}}$ on $Z_7$ as we did
above for $\s_{1}$ and $\s_{2}$, one obtains that, surprisingly,
the defining coprime polynomials of the rational map $\widetilde{\s_{2}^{3}}:\PP^{3}\dashrightarrow\PP^{3}$
have degree $4$, although the maps $\widetilde{\s_{2}^{3}}$ and
$\tilde{\s_{2}}^{3}$ have the same effect on $Z_7$. Moreover although
$(\widetilde{\s_{2}^{3}})^{2}$ is the identity on the surface $Z_7$,
it is not the identity on $\PP^{3}$ (it is defined by degree $6$
coprime polynomials). Moreover, one can compute that the rational map
$(\widetilde{\s_{2}^{3}})^{4}$ is defined by degree $21$ coprime
polynomials.
\end{rem}

\subsection{\label{subsec:Fibration-preserved-by}Fibration preserved by $\protect\lldt$ and the elliptic modular surface $\Xi_{1}(7)$}

The line $L_{6}:\,y_{2}-y_{4}=y_{3}=0$ is contained in the surface
$Z_7$. Let $\g:Z_{7}\to\PP^{1}$ be the elliptic fibration induced
by the projection from that line. One obtains a smooth cubic affine
model $A$ in $\AA^{2}_{\QQ(t)}=\AA^{2}_{\QQ(t)}(x,y)$ of that elliptic fibration by
substituting $(x,1+ty,y,1)$ in the equation of $Z_{7}$. A computation
shows that $Z_{7}^{s}\to\PP^{1}$ is (isomorphic to) the elliptic
surface $Y$ associated to the elliptic curve $E_{/\QQ(t)}$ with
Weierstrass model
\[
E:\,y^{2}=x^{3}+\tfrac{(t^{4}-2t^{3}+3t^{2}+6t+1)}{(t+1)^{2}}x^{2}+\tfrac{8t^{3}(t^{2}-t-1)}{(t+1)^{3}}x+16\tfrac{t^{6}}{(t+1)^{4}}.
\]
The map between $A$ and $Y$ sends $(0,0)$ to the zero section.
The elliptic fibration $Y\to\PP^{1}$ has singular fibers $3I_{7}+3I_{1}$
at the points 
\[
\infty,0,-1,t^{3}-5t^{2}-8t-1=0,
\]
respectively. 

We recall that the curve $X_{1}(7)$ parametrizes (up to isomorphisms)
the pairs $(E,p)$ where $E$ is an elliptic curve and $p$ is a torsion point
of order $7$ on $E$. A Weierstrass
model $E'$ of the elliptic modular surface $\Xi_{1}(7)$ over the
curve $X_{1}(7)\simeq\PP^{1}$ is computed in \cite{TY}. The $j$-invariant maps $j_{E}(t),j_{E'}(t)\in\QQ$
of $E$ and $E'$ are related by the equality $j_{E'}(t)=j_{E}(-\tfrac{1}{t})$,
which shows that $E$ is isomorphic to $E'$ and $Z_7^{s}$ is isomorphic
to the elliptic modular surface $X_{1}(7)$. 

The Mordell--Weil group of $E$ is isomorphic to $\ZZ/7\ZZ$; it is
generated by the point 
\[
p_{t}=(0:4t^{3}:(t+1)^{2})\in E.
\]
 We thus obtained the first part of the following theorem:
\begin{thm}
	\begin{enumerate}[a)]
		\item The K3 surface $Z_7^{s}$ is isomorphic to the modular elliptic surface
		$\Xi_{1}(7)$.
		\item The rational self-map $\lldt$ preserves the elliptic fibration
		$\g:Z_{7}\to\PP^{1}$ and acts on the base curve $\PP^{1}$ through
		the order $3$ map $t\to-1/(t+1)$. There exists an automorphism $\s_{0}$
		coming from $\aut(M_{7})$ such that $\s_{0}\lldt$ preserves the
		fibration $\g$ and acts on $E$ as the multiplication by $2$ map.
	\end{enumerate}
\end{thm}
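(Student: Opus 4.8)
The plan is to establish the two parts of the theorem separately, treating part (a) as essentially a $j$-invariant matching computation and part (b) as a direct verification on the Weierstrass model combined with the fibration-preservation property of $\lldt$ and the commutation with $\aut(M_{7})$.

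For part (a), the first part of the statement was already obtained in the text preceding the theorem: the $j$-invariants $j_{E}(t)$ and $j_{E'}(t)$ of the surface $Z_7^{s}\to\PP^1$ and of the Tsukamoto--Yui model $E'$ of $\Xi_{1}(7)$ satisfy $j_{E'}(t)=j_{E}(-1/t)$. I would elaborate that this equality of $j$-maps, combined with the fact that both fibrations have the same singular fiber configuration $3I_{7}+3I_{1}$ and the same Mordell--Weil group $\ZZ/7\ZZ$, forces an isomorphism of the elliptic surfaces up to a M\"obius reparametrization of the base $\PP^1$. Since an elliptic modular surface $\Xi_{1}(n)$ is determined (up to isomorphism) by its $j$-map together with the level structure recorded by the torsion section, matching these data suffices. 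The explicit torsion generator $p_{t}=(0:4t^{3}:(t+1)^{2})$ exhibited just before the theorem confirms the level-$7$ structure is the correct one.

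For part (b), I would proceed in three steps. First, to see that $\lldt$ preserves the fibration $\g$, I would observe that $\lldt$ was computed explicitly (Section \ref{subsec:Practical-SemiConj}) and that the line $L_{6}$ used to define $\g$ is, by the results of Subsection \ref{subsec:Open-surface-R7}, permuted among the distinguished lines $L_{k}$ by $\lldt$; hence the pencil of hyperplanes through $L_{6}$ is sent to a pencil of the same shape, and $\lldt$ descends to a self-map of the base. A direct substitution into the parametrization $t\mapsto$ (fiber) then identifies the induced base map as $t\mapsto -1/(t+1)$, which one checks has order $3$. Second, since $\lldt$ acts on $\Xi_{1}(7)$ and the multiplication-by-$m$ maps are the only fiberwise-group endomorphisms, I would use the global result quoted from \cite{KR} — that the operator $\L$ induces multiplication by $-2$ on $\Xi_{1}(n)$ — to pin down the fiberwise behaviour of $\lldt$ as multiplication by $-2$ composed with a translation by a torsion section and the base automorphism. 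Third, because $\aut(M_{7})$ acts by translations by the torsion sections (the Mordell--Weil group $\ZZ/7\ZZ$ is generated by $p_t$, and $\aut(M_7)$ contains the relevant order-$7$ element) and, by the Proposition preceding the theorem, commutes with $\lldt$, I would choose $\s_0$ to be the automorphism correcting both the base reparametrization and the sign, so that $\s_0\lldt$ preserves $\g$ and acts fiberwise as multiplication by $2$.

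The main obstacle I anticipate is the bookkeeping in the third step: separating the action of $\lldt$ into its base component, its translation (torsion) component, and its multiplication component, and then verifying that a single automorphism $\s_0\in\aut(M_7)$ simultaneously cancels the base twist $t\mapsto -1/(t+1)$ and converts multiplication by $-2$ into multiplication by $+2$. This requires knowing precisely how each generator $\s_1,\s_2$ of the Frobenius group $F_7$ acts on the base $\PP^1$ and on the Mordell--Weil lattice, data extractable from the explicit formulas given in Subsection \ref{subsec:The-automorphism-group} but tedious to track. The verification that $\lldt^{*}\o=-2\o$ (already recorded in the proof of \Cref{thm:action-L23-auto}) is consistent with multiplication by $-2$ on fibers provides a useful independent check on the sign.
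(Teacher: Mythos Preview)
Your treatment of part (a) matches the paper's. For part (b), however, your route diverges from the paper's and has two genuine gaps. First, you invoke the global result from \cite{KR} that $\lldt$ is multiplication by $-2$ as a black box; but the explicit purpose of this section is to give an \emph{independent} proof for $n=7$, so importing that conclusion is not admissible here. Second, your claim that ``$\aut(M_{7})$ acts by translations by the torsion sections'' is incorrect as stated and is in fact self-defeating: translations act trivially on the base and multiply the fiberwise holomorphic $1$-form by $+1$, so no composite of $\lldt$ with a translation could cancel the order-$3$ base twist $t\mapsto -1/(t+1)$, nor could it convert a multiplier $-2$ into $+2$. The action of the order-$42$ Frobenius group $F_{7}$ on $Z_{7}^{s}$ is richer than translations (indeed the element $\s_{0}$ actually used has order $3$, hence lies outside the $\ZZ/7\ZZ$-subgroup), and you have not established what the non-torsion part of $\aut(M_{7})$ does to the base or to the fibers, so the existence of the required $\s_{0}$ remains unproved in your outline.

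The paper's argument avoids all of this by direct computation: one evaluates $\s\lldt$ on the generic point of the affine model $A$ for every $\s\in\aut(M_{7})$, finds that exactly $14$ of these act trivially on the base parameter $t$, singles out one such $\s_{0}$, and then verifies on the Weierstrass model $E$ that $\mu=\s_{0}\lldt$ satisfies $\mu^{*}\omega=2\omega$ for the holomorphic $1$-form on $E$ and fixes the zero section. These two facts force $\mu=[2]$. Note in particular that the relevant form here is the $1$-form on the generic fiber, not the global $2$-form on the K3 used in \Cref{thm:action-L23-auto}; the earlier relation $\lldt^{*}\omega=-2\omega$ concerns the latter and does not directly pin down the sign of the fiberwise multiplier once a base automorphism is in play. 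Your step~1 argument (that $L_{6}$ being permuted among the $L_{k}$ implies preservation of the fibration) is also not what the paper does and does not obviously follow: the image of $L_{6}$ under $\lldt$ tells you nothing about whether hyperplane sections through $L_{6}$ are sent to hyperplane sections through $L_{6}$. The paper instead simply reads off the induced map on $t$ from the explicit formula for $\lldt$.
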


The last property implies that the operator $\ldt$ preserves the
moduli interpretation of $X_{1}(7)$.
\begin{proof}
Using the period map and the function field of $A$, one computes
that the action of $\lldt$ on the base $\PP^{1}$ of the fibration
$A\to\PP^{1}$ is through the map $t\to-1/(t+1)$.

An automorphism $\s\in\aut(M_{7})$ acts on the surface $Z_7\cap\{y_{4}\neq0\}$ and 
on the affine model $A$. Using the period map
and again the generic point of $A$, one computes the action of the
rational self-maps $\s\lldt$ ($\s\in\aut(M_{7})$) on $A$. For
$14$ of these maps, the action on the base curve $\PP^{1}$ is trivial.
This is the case for example for 
\[
\s_{0}=(1,2,4)(3,6,7)(8,9,11)(10,13,14).
\]
The map $\mu=\s_{0}\lldt$ also acts on $E$, one can thus compute its action
on the generic point of $E$. Knowing that action, we are now able
to compute the pull-back of a non-zero holomorphic one-form $\omega$
by $\mu$, which is: $\mu^{*}\o=2\o$. Using the seven torsion points,
one computes that $\mu$ fixes the origin, thus $\mu=[2]$. 
\end{proof}

Among the $12$ lines contained in $Z_7$, in the complement of $\cR_{7}$,
eight are contained in the singular fibers of the fibration $\g$,
and $4$ are sections.

Using the pull-back to $Z_7^{s}$ of the lines contained in $Z_7$ and
the $\cu$-curves of the desingularization, one may compute the N\'eron--Severi
lattice of $Z_7^{s}$, and obtain that it has discriminant $-7$ and
rank $20$. The modular elliptic surface $\Xi_{1}(7)$ is well-known
and studied; it is known as the unique K3 surface with N\'eron--Severi
lattice of rank $20$ and discriminant $-7$: we obtain
that way another proof that $Z^s$ is isomorphic to $\Xi_1(7)$. The inequivalent fibrations
of $\Xi_{1}(7)$ have been classified (see \cite{Lecacheux}). Another
remarkable fact is that $\Xi_{1}(7)$ is a ball-quotient surface:
there exists a co-compact lattice $\G$ in the automorphism group
of the unit ball $\BB_{2}$ such that $\Xi_{1}(7)\simeq\BB_{2}/\G$~\cite{Naruki}.
The automorphism group of $\Xi_{1}(7)$ is studied
in \cite{Ujikawa}.

\subsection{\label{subsec:Practical-SemiConj}The K3 surface $Z_7^{s}$ is semi-conjugated
to the plane}

The rational self-map $\l_{\{2\},\{3\}}$ acting on the quartic $Z_7\hookrightarrow\PP^{3}(y_{1},\dots,y_{4})$
is defined by 
\[
\lldt=(P_{1}:\dots:P_{4}),
\]
where $P_{1},\dots,P_{4}$ are four homogeneous degree $11$ polynomials
computed via the period map. These polynomials  are given in the 
ancillary file in the arXiv version of this paper; they also
may be obtained from the polynomials $Q_{1},Q_{2},Q_{3}$ and
$R$ below. A remarkable fact about the polynomials $P_{1},\dots,P_{4}$
is that 
\begin{equation}
\deg_{y_{1}}(P_{1})=1,\,\deg_{y_{1}}(P_{2})=\deg_{y_{1}}(P_{3})=\deg_{y_{1}}(P_{4})=0,\label{eq:Deg_y1}
\end{equation}
where $\deg_{y_{1}}$ denote the degree relative to the variable $y_{1}$. 

Let us define the polynomials $\tilde{P_{k}}=P_{k+1}(0,z_{1},z_{2},z_{3})$
for $k\in\{1,2,3\}$ (where $z_{1},z_{2},z_{3}$ are the three coordinates
on the plane $\PP^{2}:y_{1}=0$). The polynomials $\tilde{P}_{k}$, $k\in\{1,2,3\}$
define a rational self-map $F:\PP^{2}\dashrightarrow\PP^{2}$; the
base locus of the linear system generated by $\tilde{P_{1}},\tilde{P_{2}},\tilde{P_{3}}$
is the quintic curve $B$ defined by
\[
\begin{array}{c}
Q=z_{1}^{3}z_{2}^{2}+2z_{1}^{2}z_{2}^{3}+z_{1}z_{2}^{4}+2z_{1}^{3}z_{2}z_{3}+4z_{1}^{2}z_{2}^{2}z_{3}+2z_{1}z_{2}^{3}z_{3}+z_{1}^{3}z_{3}^{2}\\
-4z_{1}^{2}z_{2}z_{3}^{2}-9z_{1}z_{2}^{2}z_{3}^{2}-4z_{2}^{3}z_{3}^{2}-2z_{1}^{2}z_{3}^{3}+2z_{1}z_{2}z_{3}^{3}+4z_{2}^{2}z_{3}^{3}+z_{1}z_{3}^{4}.
\end{array}
\]
That curve is irreducible, has geometric genus $1$ and its normalization
has $j$-invariant $-5^{6}/28$. By removing the base locus $B$, one
obtains that the rational self-map $F$ is defined by the following
degree $6$ polynomials 

\begin{align*}
Q_{1}=&z_{1}Q,\\
Q_{2}=&-z_{1}^{5}z_{2}-3z_{1}^{4}z_{2}^{2}-3z_{1}^{3}z_{2}^{3}-z_{1}^{2}z_{2}^{4}+z_{1}^{4}z_{2}z_{3}+2z_{1}^{3}z_{2}^{2}z_{3}+z_{1}^{2}z_{2}^{3}z_{3}\\
&+z_{1}^{3}z_{2}z_{3}^{2}+2z_{1}^{2}z_{2}^{2}z_{3}^{2}+z_{1}z_{2}^{3}z_{3}^{2}-z_{1}^{2}z_{2}z_{3}^{3}+z_{2}^{3}z_{3}^{3}-z_{2}^{2}z_{3}^{4},\\
Q_{3}=&2z_{1}^{4}z_{2}z_{3}+4z_{1}^{3}z_{2}^{2}z_{3}+2z_{1}^{2}z_{2}^{3}z_{3}+z_{1}^{4}z_{3}^{2}-4z_{1}^{3}z_{2}z_{3}^{2}-8z_{1}^{2}z_{2}^{2}z_{3}^{2}-3z_{1}z_{2}^{3}z_{3}^{2}\\
&-2z_{1}^{3}z_{3}^{3}+2z_{1}^{2}z_{2}z_{3}^{3}+4z_{1}z_{2}^{2}z_{3}^{3}+z_{2}^{3}z_{3}^{3}+z_{1}^{2}z_{3}^{4},
\end{align*}

and the indeterminacy locus of $F=(Q_{1}:Q_{2}:Q_{3})$ are the $8$
points {\small{}
\[
q_{1}=(0:0:1),q_{2}=(1:0:1),q_{3}=(0:1:0),q_{4}=(-1:1:0),q_{5}=(1:0:0),\,q_{r}=(-r{}^{2}+2r:r:1)
\]
}where $r$ is any root of $X^{3}-4X^{2}+3X+1$ (the field $\QQ(r)$
is the degree $3$ real subfield of $\QQ(\zeta_{7})$). 

Let us define the projection map $\pi_{1}:Z_7\to\PP^{2}(z_{1},z_{2},z_{3})$
from the point $s_{8}:y_{2}=y_{3}=y_{4}=0$ contained in surface $Z_7$.
This point is an $A_{3}$ singularity on $Z_7$, in particular it has multiplicity
$2$, thus the map $\pi_{1}$ from the quartic to the plane has degree
$2$. One has: 
\begin{lem}
The branch loci of $\pi_{1}$ is the union of the quintic curve $B=\{Q=0\}$
and the line $L:z_{1}=0$. 
\end{lem}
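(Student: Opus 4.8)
The plan is to compute the branch locus of $\pi_1$ directly from the geometry of the projection from the $A_3$ singularity $s_8=(1:0:0:0)$. Since $s_8$ has multiplicity $2$ on the quartic $Z_7$, a generic line through $s_8$ meets $Z_7$ in two further points, giving the degree $2$ map $\pi_1$; the branch locus consists of those $z=(z_1:z_2:z_3)\in\PP^2$ for which the two residual intersection points coincide. First I would parametrize the lines through $s_8$: in the chart $y_1\neq 0$ one writes a point of the line through $s_8$ and $(0:z_1:z_2:z_3)$ as $(y_1:sz_1:sz_2:sz_3)$, and substitute this into the defining equation \eqref{eq:quarti-Z} of $Z_7$. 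Because $s_8$ is a double point, the resulting polynomial in the affine parameter has its two lowest-degree terms absent, so after clearing the factor coming from $s_8$ one is left with a \emph{quadratic} equation $a(z)\,u^2+b(z)\,u+c(z)=0$ in the remaining parameter $u$.

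The key step is then to recognize that the branch locus is exactly the discriminant divisor $\{b(z)^2-4a(z)c(z)=0\}$ of this quadratic, together with any component where the quadratic degenerates (where $a(z)=0$, corresponding to the residual line lying in a special position, typically contributing the line $L$). I would compute $a,b,c$ explicitly from the substitution and factor the discriminant. The expected outcome, matching the statement, is that the discriminant factors as a perfect-square-free part $Q$ (the quintic $B$) times the linear form $z_1$, i.e. $b^2-4ac = (\text{unit})\cdot z_1\cdot Q$ up to squares. Concretely I would verify $\operatorname{disc}_u\bigl(a u^2+bu+c\bigr)=c_0\, z_1\, Q(z_1,z_2,z_3)$ for the quintic $Q$ displayed above and some nonzero constant $c_0$; this is a single finite computation (Magma/\texttt{OSCAR}) of a resultant followed by factorization.

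The main obstacle is bookkeeping rather than conceptual: one must choose the affine chart and the parametrization of the pencil of lines through $s_8$ so that the factor of the substituted quartic attributable to the double point $s_8$ is cleanly identified and removed, leaving a genuine quadratic whose discriminant is the branch locus. A subtlety is that $s_8$ being an $A_3$ (rather than an ordinary node) means the local structure of $Z_7$ at $s_8$ is tangent to a plane to higher order; one should check that this does not introduce an extra base component in the pencil that would spuriously inflate the computed branch curve. I would confirm that the quintic $Q$ is indeed the irreducible curve $B$ already identified as the base locus of the linear system $(\tilde P_1:\tilde P_2:\tilde P_3)$ (same equation, same $j$-invariant $-5^6/28$), which cross-checks the computation, and that the line $z_1=0$ is the image of the exceptional locus of the projection, i.e. the trace on $\PP^2$ of the tangent directions at $s_8$ lying in $Z_7$. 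Identifying these two components and verifying the discriminant factorization completes the proof.
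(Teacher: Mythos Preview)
Your approach is essentially the paper's: since $s_8=(1:0:0:0)$ is a double point, the quartic \eqref{eq:quarti-Z} is quadratic in $y_1$, and the branch locus of the projection is the vanishing of its discriminant with respect to $y_1$, which factors as $z_1\cdot Q$ after the substitution $y_{i+1}=z_i$. The paper states this in one line; your parametrization of lines through $s_8$ and division by $s^2$ is just a rephrasing of the same computation.

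One small correction: you suggest that the branch locus might be the discriminant \emph{together with} the degeneration locus $a(z)=0$, and that $L$ ``typically'' comes from the latter. In fact both components $L$ and $B$ arise directly from the discriminant factorization $b^2-4ac = (\text{const})\cdot z_1\cdot Q$; the line $z_1=0$ appears because both the $y_1^2$-coefficient $a=z_1(z_1+z_2-z_3)$ and the $y_1$-coefficient $b$ share the factor $z_1$, so $b^2-4ac$ is divisible by $z_1$ but not $z_1^2$. No separate analysis of the $a=0$ locus or of the $A_3$ structure at $s_8$ is needed.
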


\begin{proof}
The ramification locus of $\pi_{1}$ is the discriminant of the equation
of $Z_7$ (given in \eqref{eq:quarti-Z}) with respect to the variable
$y_{1}$. The image of the ramification curve by $\pi_{1}$ is the
curve $B+L$.
\end{proof}
The curve $B$ has singularities of type $A_{4},A_{4},A_{2}$ at the points
$q_{2},q_{4},q_{5}$, respectively. The union $L+B$ has singularities
of type $A_{1},A_{3},A_{4},A_{3},A_{4},A_{2}$ at the points
$q_{0}=(0:1:1),q_{1},q_{2},q_{3},q_{4},q_{5}$, respectively. 

A direct computation shows that 
\[
Q_{1}(Q_{1},Q_{2},Q_{3})=Q_{1}\,R^{2}
\]
for $R=\tfrac{1}{8}z_{2}^{2}(z_{1}-z_{3})^{2}R_{4}R_{7}$, where {\small{}
\begin{align*}
R_{4}=&z_{1}^{4}+2z_{1}^{3}z_{2}+z_{1}^{2}z_{2}^{2}-z_{1}^{2}z_{3}^{2}-z_{1}z_{2}z_{3}^{2}-z_{2}z_{3}^{3},\\
R_{7}=&z_{1}^{6}z_{2}+4z_{1}^{5}z_{2}^{2}+6z_{1}^{4}z_{2}^{3}+4z_{1}^{3}z_{2}^{4}+z_{1}^{2}z_{2}^{5}+z_{1}^{6}z_{3}-7z_{1}^{4}z_{2}^{2}z_{3}-11z_{1}^{3}z_{2}^{3}z_{3}-6z_{1}^{2}z_{2}^{4}z_{3}-z_{1}z_{2}^{5}z_{3}\\
&-z_{1}^{5}z_{3}^{2}+3z_{1}^{3}z_{2}^{2}z_{3}^{2}+2z_{1}^{2}z_{2}^{3}z_{3}^{2}+3z_{1}^{2}z_{2}^{2}z_{3}^{3}+5z_{1}z_{2}^{3}z_{3}^{3}+2z_{2}^{4}z_{3}^{3}-2z_{1}z_{2}^{2}z_{3}^{4}-2z_{2}^{3}z_{3}^{4}-z_{1}z_{2}z_{3}^{5}.
\end{align*}
}The images of the curves $z_{2}=0,\,z_{1}-z_{3}=0$ and $R_{4}=0$
by the rational self-map $F=(Q_{1}:Q_{2}:Q_{3})$ of $\PP^{2}$ are
the indeterminacy points $q_{2}, q_{4}, q_{2}$, respectively. The image
of the curve $R_{7}=0$ under the map $F$ is the quintic curve $B$. The image
of the quintic curve $B$ under the map $F$ is the line $L:z_{1}=0$. The rational
map $F$ preserves $L$ and the action of $F$ on $L$ is through the
map $(z_{2}:z_{3})\to(z_{2}-z_{3}:z_{3})$.

From the above description and Subsection \ref{subsecTheoretical-SConj}, the surface $Z_7^{s}$ is the minimal desingularization
of the double cover 
\[
X:\{y^{2}=Q_{1}(z_{1},z_{2},z_{3}\}\hookrightarrow\PP(3,1,1,1)
\]
branched over $L+B$. The birational map between $X$ and $Z_7$ is
given by the equalities $y_{i+1}=z_{i}$ for $i\in\{1,2,3\}$ and
\[
y_{1}=\tfrac{1}{2}(y+z_{2}^{2}z_{3}+z_{2}z_{3}^{2}+z_{2}^{2}z_{4}-z_{2}z_{3}z_{4}-z_{2}z_{4}^{2})/(z_{2}^{2}+z_{2}z_{3}-z_{2}z_{4}).
\]
We continue to denote by $\lldt$ the rational self-map 
\[
(y;z)\to(yR(z);F(z)).
\]
 Applying the results of \Cref{subsecTheoretical-SConj},
we obtain that:
\begin{thm}
The dynamical system $(Z_7^{s},\lldt)$ is semi-conjugated to $(\PP^{2},F)$.
\end{thm}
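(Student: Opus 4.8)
The plan is to verify that all the hypotheses of the general construction in \Cref{subsecTheoretical-SConj} are satisfied for the data at hand, and then simply invoke that construction. The key algebraic identity I need is the relation $Q_{1}(Q_{1},Q_{2},Q_{3}) = Q_{1}\cdot R^{2}$, which has already been recorded above with the explicit factorization $R = \tfrac{1}{8}z_{2}^{2}(z_{1}-z_{3})^{2}R_{4}R_{7}$. This is precisely the condition $F^{*}C_{1}=C_{1}+2D$ from \Cref{subsecTheoretical-SConj}, where $C_{1}:Q_{1}=0$ is the (reducible) branch sextic $L+B$ and $F=(Q_{1}:Q_{2}:Q_{3})$ is the degree $6$ self-map of $\PP^{2}$. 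Verifying this identity is a finite polynomial computation, carried out in \texttt{OSCAR} or Magma.

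First I would confirm that the sextic $C_{1}=L+B$ has only ADE singularities, so that the associated double cover $X:\{y^{2}=Q_{1}(z_{1},z_{2},z_{3})\}$ in $\PP(3,1,1,1)$ has a K3 surface as its minimal desingularization $X^{s}$; this is exactly the singularity list $A_{1},A_{3},A_{4},A_{3},A_{4},A_{2}$ at the six points $q_{0},\dots,q_{5}$ already computed for $L+B$ in \Cref{subsec:Practical-SemiConj}, all of which are ADE. Next I would identify $X^{s}$ with $Z_{7}^{s}$ via the explicit birational map $y_{i+1}=z_{i}$ together with the stated formula for $y_{1}$, so that the dynamical self-map $\tilde F:(y;z)\dashrightarrow(yR(z);F_{1}(z):F_{2}(z):F_{3}(z))$ furnished by \Cref{subsecTheoretical-SConj} corresponds precisely to $\lldt$ under this identification. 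Since $\lldt$ is dominant (established in \Cref{thm:L23-rational-self-map} and \Cref{thm:action-L23-auto}), the pair $(Z_{7}^{s},\lldt)$ is a genuine dynamical system.

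With these checks in place, \Cref{subsecTheoretical-SConj} applies verbatim: the commutativity of the diagram \eqref{eq:com-Diag} gives $\pi_{1}\circ\lldt = F\circ\pi_{1}$, where $\pi_{1}:Z_{7}\to\PP^{2}$ is the degree $2$ projection from the $A_{3}$ point $s_{8}$. The semi-conjugacy is then immediate. I expect the main obstacle to be bookkeeping rather than conceptual: one must be careful that the birational identification $X^{s}\simeq Z_{7}^{s}$ genuinely intertwines the two self-maps, i.e.\ that the map $\tilde F$ produced abstractly from $R$ agrees with the map $\lldt$ computed earlier from the period map via the degree $11$ polynomials $P_{1},\dots,P_{4}$. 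This compatibility hinges on the degree observation \eqref{eq:Deg_y1}, namely that $P_{2},P_{3},P_{4}$ are independent of $y_{1}$ (so that setting $y_{1}=0$ yields exactly the $\tilde P_{k}$ defining $F$) while $P_{1}$ is linear in $y_{1}$ (matching the factor $R(z)$ multiplying $y$ in $\tilde F$). Once this matching of the two descriptions of the self-map is confirmed, the theorem follows directly.
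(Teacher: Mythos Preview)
Your proposal is correct and follows essentially the same route as the paper: verify that the branch sextic $L+B$ has only ADE singularities, check the identity $Q_{1}(Q_{1},Q_{2},Q_{3})=Q_{1}R^{2}$, identify the double cover with $Z_{7}^{s}$ via the explicit birational map, and then invoke the general construction of \Cref{subsecTheoretical-SConj}. Your attention to the compatibility between $\tilde F$ and $\lldt$ via the degree observation \eqref{eq:Deg_y1} is exactly the point the paper relies on (implicitly) when it declares that it continues to denote by $\lldt$ the map $(y;z)\mapsto(yR(z);F(z))$.
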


\begin{rem}
a) The degrees of the coprime polynomials defining the rational maps
$F,F^{2},F^{3}$ are $6,21,82$, respectively. //
b) It would be interesting to construct rational self-maps on some 
other degree two K3 surfaces. 
\end{rem}

\section{\label{sec:The-Octagon-and}The Octagon and the operator $\protect\L_{\{2\},\{3,4\}}$}

\subsection{The matroid $M_{8}$ constructed from the regular octagon}

Consider the $16$ lines in Figure \ref{fig:Octagon}: the black
lines $\ell_{1},\dots,\ell_{8}$ are the $8$ lines of the regular
octagon $\cC_{1}$ and the blue lines $\ell_{1}',\dots,\ell_{8}'$
are the 8 lines symmetries of $\cC_{0}$. The image of $\cC_{0}=\ell_{1}+\dots+\ell_{8}$
by the operator $\L_{\{2\},\{3,4\}}$ is the line arrangement $\cC_{1}=\ell_{1}'+\dots+\ell_{8}'$. 

\begin{figure}[h]
\begin{center}

\begin{tikzpicture}[scale=0.5]

\clip(-11.367310671897055,-7.696664753899389) rectangle (8.567727627955502,7.634088050301914);
\draw [line width=0.25mm,domain=-11.367310671897055:8.567727627955502] plot(\x,{(-4.84212581620924-1.4283556979968257*\x)/-1.4283556979968262});
\draw [line width=0.25mm] (-2.375,-7.696664753899389) -- (-2.375,7.634088050301914);
\draw [line width=0.25mm,domain=-11.367310671897055:8.567727627955502] plot(\x,{(-4.8278422592292705-1.4283556979968255*\x)/1.4283556979968255});
\draw [line width=0.25mm,domain=-11.367310671897055:8.567727627955502] plot(\x,{(-4.915378509953586-0.*\x)/2.02});
\draw [line width=0.25mm,domain=-11.367310671897055:8.567727627955502] plot(\x,{(-5.008831203697934--1.4283556979968255*\x)/1.428355697996826});
\draw [line width=0.25mm] (2.501711395993651,-7.696664753899389) -- (2.501711395993651,7.634088050301914);
\draw [line width=0.25mm,domain=-11.367310671897055:8.567727627955502] plot(\x,{(-5.023114760677902--1.428355697996826*\x)/-1.4283556979968255});
\draw [line width=0.25mm,domain=-11.367310671897055:8.567727627955502] plot(\x,{(-4.935578509953586-0.*\x)/-2.02});
\draw [line width=0.25mm,color=blue,domain=-11.367310671897055:8.567727627955502] plot(\x,{(--0.034483556979966495-0.*\x)/6.89671139599365});
\draw [line width=0.25mm,color=blue,domain=-11.367310671897055:8.567727627955502] plot(\x,{(-0.10359495297362065--2.02*\x)/4.8767113959936506});
\draw [line width=0.25mm,color=blue,domain=-11.367310671897055:8.567727627955502] plot(\x,{(-0.2845838974422854--4.8767113959936506*\x)/4.876711395993652});
\draw [line width=0.25mm,color=blue,domain=-11.367310671897055:8.567727627955502] plot(\x,{(-0.29886745442225093--4.8767113959936506*\x)/2.02});
\draw [line width=0.25mm,color=blue] (0.06335569799682518,-7.696664753899389) -- (0.06335569799682518,7.634088050301914);
\draw [line width=0.25mm,color=blue,domain=-11.367310671897055:8.567727627955502] plot(\x,{(-0.3190674544222505--4.8767113959936506*\x)/-2.02});
\draw [line width=0.25mm,color=blue,domain=-11.367310671897055:8.567727627955502] plot(\x,{(-0.33335101140221646--4.8767113959936506*\x)/-4.8767113959936506});
\draw [line width=0.25mm,color=blue,domain=-11.367310671897055:8.567727627955502] plot(\x,{(-0.15236206693355525--2.02*\x)/-4.8767113959936506});

\begin{scriptsize}
\draw [fill=black] (-2.375,1.015) circle (0.99mm);
\draw[color=black] (-2.8,1.5) node {$p_{1,2}$};

\draw [fill=black] (-2.375,-1.005) circle (0.99mm);
\draw[color=black] (-3.1,-0.9) node {$p_{2,3}$};

\draw [fill=black] (-0.9466443020031745,-2.4333556979968254) circle (0.99mm);
\draw[color=black] (-0.9064752188652873,-3) node {$p_{3,4}$};

\draw [fill=black] (1.0733556979968246,-2.4333556979968254) circle (0.99mm);
\draw[color=black] (1.8,-2.8) node {$p_{4,5}$};

\draw [fill=black] (2.5017113959936506,-1.005) circle (0.99mm);
\draw[color=black] (3.3,-0.9) node {$p_{5,6}$};

\draw [fill=black] (2.501711395993651,1.015) circle (0.99mm);
\draw[color=black] (3.3,1.0) node {$p_{6,7}$};

\draw [fill=black] (1.0733556979968255,2.443355697996825) circle (0.99mm);
\draw[color=black] (1.5,2.8) node {$p_{7,8}$};

\draw [fill=black] (-0.9466443020031738,2.4433556979968256) circle (0.99mm);
\draw[color=black] (-1.2,2.9) node {$p_{1,8}$};
\draw[color=black] (4.2,7.1281226112193625) node {$\ell_{1}$};
\draw[color=black] (-1.8931078250762639,7.1281226112193625) node {$\ell_{2}$};
\draw[color=black] (-9.659677314993441,7.1281226112193625) node {$\ell_{3}$};
\draw[color=black] (-11.025784000516332,-1.9286587483583046) node {$\ell_{4}$};
\draw[color=black] (-4.2,-7) node {$\ell_{5}$};
\draw[color=black] (2.05,-7.) node {$\ell_{6}$};

\draw[color=black] (7.2,-4.3) node {$\ell_{7}$};
\draw[color=black] (7.5,2.1) node {$\ell_{8}$};

\draw [fill=black] (-3.385,0.005) circle (0.99mm);
\draw[color=black] (-3.4,0.5) node {$p_{1,3}$};

\draw [fill=black] (-2.375,-2.433355697996826) circle (0.99mm);
\draw[color=black] (-2.095494000709285,-2.8) node {$p_{2,4}$};

\draw [fill=black] (0.06335569799682488,-3.4433556979968256) circle (0.99mm);
\draw[color=black] (0.8,-3.4) node {$p_{3,5}$};

\draw [fill=black] (2.50171139599365,-2.4333556979968254) circle (0.99mm);
\draw[color=black] (3.2,-2.85) node {$p_{4,6}$};

\draw [fill=black] (3.51171139599365,0.005) circle (0.99mm);
\draw[color=black] (4.4,0.2) node {$p_{5,7}$};

\draw [fill=white] (0,0.005) circle (2.9mm);

\draw [fill=black] (2.5017113959936514,2.4433556979968247) circle (0.99mm);
\draw[color=black] (3.2,2.1) node {$p_{6,8}$};

\draw [fill=black] (0.0633556979968255,3.453355697996825) circle (0.99mm);
\draw[color=black] (0.2,4.1) node {$p_{1,7}$};

\draw [fill=black] (-2.375,2.4433556979968256) circle (0.99mm);
\draw[color=black] (-3.411004142323921,2.8) node {$p_{2,8}$};

\draw [fill=black] (-5.823355697996828,-2.433355697996826) circle (0.99mm);
\draw[color=black] (-6.295007145094468,-1.99) node {$p_{1,4}$};

\draw [fill=black] (-2.375,-5.88171139599365) circle (0.99mm);
\draw[color=black] (-1.6,-5.9) node {$p_{2,5}$};

\draw [fill=black] (2.50171139599365,-5.881711395993651) circle (0.99mm);
\draw[color=black] (3.2,-5.7) node {$p_{3,6}$};

\draw [fill=black] (5.950067093990474,-2.4333556979968254) circle (0.99mm);
\draw[color=black] (6.3,-1.99) node {$p_{4,7}$};

\draw [fill=black] (5.9500670939904765,2.443355697996824) circle (0.99mm);
\draw[color=black] (6.2,2.) node {$p_{5,8}$};

\draw [fill=black] (2.5017113959936514,5.89171139599365) circle (0.99mm);
\draw[color=black] (3.3,5.939103829375368) node {$p_{1,6}$};

\draw [fill=black] (-2.375,5.891711395993651) circle (0.99mm);
\draw[color=black] (-3.132723150828517,5.964402101329494) node {$p_{2,7}$};

\draw [fill=black] (-5.823355697996827,2.443355697996826) circle (0.99mm);
\draw[color=black] (-5.611953802333023,3.) node {$p_{3,8}$};

\draw [fill=black] (-5.835,5.915) circle (0.99mm);
\draw[color=black] (-5.485462442562384,6.445069268457919) node {$p_{3,7}$};

\draw [fill=black] (-9.166361011887952,-0.005990079844608537) circle (0.99mm);
\draw[color=black] (-8.82483434050723,0.5252736311920688) node {$p_{4,8}$};

\draw [fill=black] (-5.955,-6.025) circle (0.99mm);
\draw[color=black] (-5.510760714516512,-6.5) node {$p_{1,5}$};

\draw [fill=black] (0.085,-5.885) circle (0.99mm);
\draw[color=black] (0.8,-5.849890901248077) node {$p_{2,6}$};

\draw[color=blue] (-10.94988918465395,-0.4) node {$\ell_{6}'$};

\draw[color=blue] (-10.94988918465395,-4.0) node {$\ell_{5}'$};

\draw[color=blue] (-8.,-7.2) node {$\ell_{4}'$};

\draw[color=blue] (-3.25,-7.2) node {$\ell_{3}'$};

\draw[color=blue] (-0.4,-7.2) node {$\ell_{2}'$};

\draw[color=blue] (2.82,-7.2) node {$\ell_{1}'$};

\draw[color=blue] (-7.2,6.7) node {$\ell_{8}'$};
\draw[color=blue] (-10.7,3.9) node {$\ell_{7}'$};

\end{scriptsize}

\end{tikzpicture}

\end{center} 

\caption{\label{fig:Octagon}Matroid $M_8$ ($\ell_i$ and $\ell_{i+4}$ meet at infinity at point $p_{i,i+4}$).}
\end{figure}

The $8$ lines $\ell_{i},\ell_{j}$ of $\cC_{0}=(\ell_{1},\dots\ell_{8})$
meet in $28$ double points denoted by $p_{i,j}$ (some points are
at infinity). The lines $\ell_{1}',\dots,\ell_{8}'$ are the lines
containing the points in sets $S_{1},\dots,S_{8}$ which are respectively
\begin{equation}
\begin{array}{c}
\{p_{1,8},p_{2,7},p_{3,6},p_{4,5}\},\{p_{1,7},p_{2,6},p_{3,5}\},\{p_{1,6},p_{2,5},p_{3,4},p_{7,8}\},\{p_{1,5},p_{2,4},p_{6,8}\},\\
\{p_{1,4},p_{2,3},p_{5,8},p_{6,7}\},\{p_{1,3},p_{4,8},p_{5,7}\},\{p_{1,2},p_{3,8},p_{4,7},p_{5,6}\},\{p_{2,8},p_{3,7},p_{4,6}\}.
\end{array}\label{eq:Octa-Set-po}
\end{equation}
These sets $S_{k},\,k=1,\dots,8$ form a partition of the $28$ double
points of $\cC_{0}$; these $28$ points are the triple points of
$\cC_{0}\cup\cC_{1}$. One has the relation $\L_{\{2\},\{3,4\}}(\cC_{0})=\cC_{1}$
(as unlabeled line arrangements).

Let $M_{8}$ be the matroid associated to the incidences between the
$16$ labeled lines $\ell_{1},\dots,\ell_{8},\ell_{1}',\dots,\ell_{8}'$
and the $28$ triple points: it is obtained from the matroid associated
to the labeled line arrangement $\cC_{0}\cup\cC_{1}$, but we discard
all non-bases coming from the central point, so that $M_{8}$
has $16$ atoms and only $28$ non-bases. We denote by $\cR_{8}$
the moduli space of realizations of $M_{8}$ (over $\CC)$. 
\begin{rem}
\label{rem:No-cano-label}A priori, there is no canonical choice for
the labelings of the lines $\ell_{1}',\dots,\ell_{8}'$ in the unlabeled
line arrangement $\L_{\{2\},\{3,4\}}(\cC_{0})$. The choice we made
in Equation \eqref{eq:Octa-Set-po} will be justified later, see Remark
\ref{rem:No-Cano-label-2}.
\end{rem}

\subsection{The moduli space $\protect\cR_{8}$ of $M_{8}$}

A direct computation in \texttt{OSCAR} shows that the moduli space $\cR_{8}$ is two-dimensional,
and an open sub-set of the quartic surface $Z_{8}$ in $\PP^{3}$
with the equation
\[
y_{1}y_{2}^{2}y_{3}-y_{1}^{2}y_{2}y_{4}+y_{1}y_{2}^{2}y_{4}+y_{1}^{2}y_{3}y_{4}-2y_{1}y_{2}y_{3}y_{4}-y_{1}y_{3}^{2}y_{4}+y_{1}y_{3}y_{4}^{2}-y_{2}y_{3}y_{4}^{2}+y_{3}^{2}y_{4}^{2}=0.
\]
The surface $Z_{8}$ has singularities $A_{2},A_{2},A_{3},A_{4},A_{3},A_{1}$
at the respective points {
\[
(1:0:0:0):(0:1:0:0):(0:0:1:0):(0:0:0:1):(1:1:1:1):(1:0:1:0).
\]
}Its minimal desingularization $Z_{8}^{s}$ is a K3 surface. The realization
$\cA(x)$ corresponding to a generic point $x=(x_{1},x_{2},x_{3})$
of $Z_{8}$ in the affine chart $\mathbb{A}^{3}=\{y_{4}\neq0\}$ is
the union $\cA(x)=\cC_{0}(x)\cup\cC_{1}(x)$, where $\cC_{0}(x)$
is the line arrangement with eight lines with normal vectors the four
vectors of the canonical basis and the following four vectors {
\[
\begin{array}{c}
(x_{1}-x_{2}:x_{1}^{2}-x_{1}x_{2}-x_{1}x_{3}+x_{1}-x_{2}+x_{3}:x_{1}-x_{2}x_{3}-x_{2}+x_{3}),\\
(x_{1}x_{2}-x_{1}x_{3}-x_{2}+x_{3}:x_{1}x_{2}^{2}-x_{1}x_{2}-x_{1}x_{3}+x_{1}-x_{2}+x_{3}:x_{1}x_{2}x_{3}-2x_{1}x_{3}+x_{1}-x_{2}+x_{3})\\
(x_{1}-1:x_{1}x_{2}-x_{2}:x_{1}-x_{2}),(1:x_{1}:x_{3}).
\end{array}
\]
}Moreover, $\cC_{1}(x)$ is the line arrangement with normal vectors
\[
\begin{array}{c}
(x_{1}x_{2}-x_{1}x_{3}-x_{2}+x_{3}:x_{1}x_{2}^{2}-x_{1}x_{2}-x_{1}x_{3}+x_{1}-x_{2}+x_{3}:x_{1}x_{2}-x_{1}x_{3}-x_{2}^{2}+x_{2}x_{3}),\\
(x_{1}^{2}x_{2}-x_{1}^{2}x_{3}-x_{1}x_{2}^{2}+x_{1}x_{2}x_{3}-x_{1}x_{2}+x_{1}x_{3}+x_{2}^{2}-x_{2}x_{3}:x_{1}^{3}x_{2}-x_{1}^{3}x_{3}-x_{1}^{2}x_{2}^{2}+x_{1}^{2}x_{3}^{2}\\
+2x_{1}x_{2}x_{3}-x_{1}x_{2}-2x_{1}x_{3}^{2}+x_{1}x_{3}+x_{2}^{2}-2x_{2}x_{3}+x_{3}^{2}:x_{1}^{2}x_{2}x_{3}-x_{1}^{2}x_{2}-x_{1}^{2}\\
x_{3}+x_{1}^{2}+x_{1}x_{2}^{2}-2x_{1}x_{2}-x_{1}x_{3}^{2}+2x_{1}x_{3}+x_{2}^{2}-2x_{2}x_{3}+x_{3}^{2}),\\
(x_{1}-x_{2}:x_{1}-x_{2}:x_{1}-x_{2}x_{3}-x_{2}+x_{3}),(x_{3}:x_{1}x_{2}:x_{3}),(0:1:1),\\
(x_{1}-1:0:x_{1}-x_{3}),(1:x_{1}:0),(1:x_{2}:x_{3}).
\end{array}
\]
From the definition of the matroid $M_{8}$, if $\cA_{0}=\cC_{0}\cup\cC_{1}$
is a realization of $M_{8}$ and $\cC_{0}$ (resp. $\cC_{1}$) denotes
its first (resp. last) eight lines then, $\L_{\{2\},\{3,4\}}(\cC_{0})=\cC_{1}$
as unlabeled line arrangements. The following operator $\L_{\{2\},\{3,4\}}^{\ell}$
gives a labeling to $\L_{\{2\},\{3,4\}}(\cC_{0})$:
\begin{defn}
The operator $\L_{\{2\},\{3,4\}}^{\ell}$ associates to a labeled
line arrangement $L_{8}$ of $8$ lines $\ell_{1},\dots,\ell_{8}$,
the labeled line arrangement $\ell_{1}',\dots,\ell_{8}'$ where $\ell_{j}'$
is the set of lines containing all the points in $S_{k}$ defined
in \eqref{eq:Octa-Set-po} ($\ell_{j}'$ is a line or the empty set). 
\end{defn}

For a generic arrangement $L_{8}$ of eight lines, one has $\L_{\{2\},\{3,4\}}^{\ell}(L_{8})=\emptyset$.
The operator $\L_{\{2\},\{3,4\}}^{\ell}$ is constructed so that if
$\cA_{0}$ is any realization of $M_{8}$ and $\cC_{0}$ (resp. $\cC_{1}$)
denotes its first (resp. last) eight lines then $\L_{\{2\},\{3,4\}}^{\ell}(\cC_{0})=\cC_{1}$
as labeled line arrangements (and of course $\L_{\{2\},\{3,4\}}(\cC_{0})=\cC_{1}$
if one forgets the labels). 

\subsection{The operator $\protect\L_{\{2\},\{3,4\}}^{\ell}(\protect\cC_{1})$
acts as a rational self-map on $\protect\cR_{8}$}

A priori the line arrangement $\L_{\{2\},\{3,4\}}^{\ell}(\cC_{1})$
could be empty, however:
\begin{thm}
Suppose that $\cA_{0}=\cC_{0}\cup\cC_{1}$ is generic among the realizations
of $M_{8}$. Then the labeled line arrangement $\cC_{2}=\L_{\{2\},\{3,4\}}^{\ell}(\cC_{1})$
has $8$ lines and $\cA_{1}=\cC_{1}\cup\cC_{2}$ is a realization
of $\cR_{8}$.
\end{thm}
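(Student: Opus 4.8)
The plan is to follow the strategy used for the heptagon in the proof of Theorem~\ref{thm:L23-rational-self-map}, replacing every statement about line arrangements by an algebraic identity over the function field $\CC(Z_8)$. Up to a projective transformation, every generic realization $\cA_0=\cC_0\cup\cC_1$ of $M_8$ is projectively equivalent to $\cC_0(x)\cup\cC_1(x)$ with the normal vectors displayed above, for a point $x=(x_1,x_2,x_3)$ in the affine chart $\{y_4\neq 0\}$ of $Z_8$. I would take $x_1,x_2,x_3$ to be the coordinate functions in $\CC(Z_8)$, so that $\cC_0$ and $\cC_1$ become the \emph{generic} arrangements and all subsequent claims reduce to identities among rational functions; a single symbolic computation at the generic point then settles the generic case.

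First I would apply the labeled operator $\L_{\{2\},\{3,4\}}^{\ell}$ to $\cC_1=(\ell_1',\dots,\ell_8')$. Concretely, I compute the $28$ pairwise intersection points $p'_{a,b}=\ell_a'\cap\ell_b'$ of $\cC_1(x)$ and, for each index $j$, collect the points $p'_{a,b}$ whose index pair $(a,b)$ lies in the set $S_j$ of Equation~\eqref{eq:Octa-Set-po}. The decisive step is to verify, for each $j\in\{1,\dots,8\}$, that these three or four points are collinear; over $\CC(Z_8)$ this is the vanishing of the $3\times 3$ determinants formed by triples among their coordinate vectors. Each such verification produces a unique line $\ell_j''$, and hence an arrangement $\cC_2=\L_{\{2\},\{3,4\}}^{\ell}(\cC_1)$ of exactly $8$ labeled lines; genericity of $x$ guarantees the eight lines are pairwise distinct, which gives the first assertion.

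Next I would check that $M(\cC_1\cup\cC_2)=M_8$. Because we work at the generic point, it suffices to confirm that the $28$ prescribed incidences (the triple points forced by the partition $S_1,\dots,S_8$) hold as identities in $\CC(Z_8)$, and that every $3\times 3$ minor corresponding to a basis of $M_8$ is a nonzero element of $\CC(Z_8)$; the latter rules out accidental extra collinearities. This shows that $\cA_1=\cC_1\cup\cC_2$ is a realization of $M_8$, i.e. its class lies in $\cR_8$. Feeding $\cC_1\cup\cC_2$ through the period map (the explicit inverse of $\kU(M_8)\to\cR_8$ discussed in Subsection~\ref{sec:Matroids}) then expresses the class of $\cA_1$ as an explicit point of $Z_8$, realizing $\L_{\{2\},\{3,4\}}^{\ell}$ as a rational self-map of $\cR_8$.

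The main obstacle is the collinearity verification of the second paragraph: a priori $\L_{\{2\},\{3,4\}}^{\ell}(\cC_1)$ could be empty, as noted just before the statement, so the entire content lies in the \emph{nonobvious} fact that the quadruples and triples of double points of $\cC_1$ remain collinear and reproduce precisely the combinatorics of $M_8$. This is exactly where the specific labeling fixed in Equation~\eqref{eq:Octa-Set-po} (see Remark~\ref{rem:No-cano-label}) is indispensable: a different matching of the point sets $S_j$ to the lines $\ell_j''$ would in general fail the collinearity test, and only this choice yields $M(\cC_1\cup\cC_2)=M_8$ again. In practice these identities are confirmed by a direct Magma/OSCAR computation with the explicit normal vectors, exactly as in the heptagon case.
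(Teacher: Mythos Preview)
Your proposal is correct and follows essentially the same approach as the paper's proof: work at the generic point of $Z_8$ over the function field $\CC(Z_8)$, compute $\cC_2=\L_{\{2\},\{3,4\}}^{\ell}(\cC_1)$ explicitly from the given normal vectors, and verify that the resulting $16$-line arrangement $\cC_1\cup\cC_2$ has matroid equal to $M_8$. The paper's proof is simply a three-sentence summary of exactly this computation, while you have helpfully unpacked what each step entails (the collinearity checks via $3\times3$ minors, the non-vanishing of basis minors, and the appeal to the period map); one small caveat is that your final clause ``only this choice yields $M(\cC_1\cup\cC_2)=M_8$'' slightly overstates what the paper claims about the labeling---Remark~\ref{rem:No-Cano-label-2} justifies the choice by the low degree of the resulting polynomials and the possibility of semi-conjugacy, not by uniqueness of the matroid match.
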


\begin{proof}
Using the function field of $\cR_{8}$, we realize the generic element
of $\cR_{8}$ using the formulas for $\cA(x)$.
Then we compute $\cC_{2}=\L_{\{2\},\{3,4\}}^{\ell}(\cC_{1})$
and obtain eight lines.
Finally we check that $\cC_{1}\cup\cC_{2}$ defines
the same matroid as $\cA_{0}$. 
\end{proof}
The operator $\L_{\{2\},\{3,4\}}^{\ell}$ acts on realizations of
$M_{8}$, sending $\cA_{0}=\cC_{0}\cup\cC_{1}$ to $\cA_{1}=\cC_{1}\cup\cC_{2}$,
where $\cC_{2}=\L_{\{2\},\{3,4\}}^{\ell}(\cC_{1})$. It therefore
acts on the moduli space $Z_{8}$: we denote by 
\[
\l_{\{2\},\{3,4\}}:Z_{8}\dashrightarrow Z_{8}
\]
that action. In order to obtain the explicit polynomials defining
$\l_{\{2\},\{3,4\}}$, we remark that one may recover the coordinates
$x_{1},x_{2},x_{3}$ of the line arrangement $\cA_{0}(x)$ from the
two last normal vectors $(1:x_{1}:0),(1:x_{2}:x_{3})$ of $\cC_{1}(x)$.
Then one computes the unique line arrangement $\tilde{\cC_{1}}\cup\tilde{\cC_{2}}$
projectively equivalent to $\cA_{1}(x)=\cC_{1}(x)\cup\cC_{2}(x)$
such that the first four normal vectors are the canonical basis. The
image of $x$ by $\l_{\{2\},\{3,4\}}$ is the point $x'=(x_{1}',x_{2}',x_{3}')$
such that the two last normal vectors of $\tilde{\cC_{2}}$ are $(1:x_{1}':0),(1:x_{2}':x_{3}')$
(and $\tilde{\cC_{1}}\cup\tilde{\cC_{2}}=\cA_{0}(x')$). Taking the
homogenization to $\PP^{3}$, one obtains that the map $\l_{\{2\},\{3,4\}}$
is defined by the four degree $10$ coprime polynomials $P_{1},\dots,P_{4}$
given in the ancillary file of the arXiv version of this paper. The base
points of $\l_{\{2\},\{3,4\}}$ are
\[
\begin{array}{c}
(-\sqrt{2}-1:\sqrt{2}+2:2\sqrt{2}+3:1),(\sqrt{2}-1:-\sqrt{2}+2:-2\sqrt{2}+3:1),\\
(i:0:1:1),(-i:0:1:1),(1:1:0:1),(0:1:1:0),(0:1:0:1).
\end{array}
\]
The line arrangements $\cC_{0}\cup\cC_{1}$ associated to the first
two points are the regular octagon and its lines of symmetries. The
line arrangements $\cC_{0}$ associated to the third and fourth points
are such that $\L_{\{2\},\{3,4\}}(\cC_{0})$ is the Ceva line arrangement
with $12$ lines; it contains $\cC_{0}$. 

Using the explicit polynomials $P_{1},\dots,P_{4}$, we obtain that:
\begin{prop}
The degree of the rational self-map $\l_{\{2\},\{3,4\}}$ on $Z_8^{s}$
is $4$.
\end{prop}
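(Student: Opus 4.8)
The plan is to follow verbatim the strategy used in the proof of \Cref{thm:action-L23-auto}, namely the method of~\cite{Voisin}: compute the action of $\l_{\{2\},\{3,4\}}$ on the holomorphic two-form of the K3 surface and read off the topological degree from the scalar by which it is multiplied. Since the topological degree is a birational invariant of a dynamical system, it suffices to carry out the computation on the singular quartic model $Z_{8}$ rather than on its desingularization $Z_{8}^{s}$. This is legitimate because the singularities of $Z_{8}$ are of ADE type, so the resolution $Z_{8}^{s}\to Z_{8}$ is crepant and the nowhere-vanishing holomorphic two-form $\o$ of $Z_{8}^{s}$ restricts to a generator of the (one-dimensional) space of such forms on the smooth locus of $Z_{8}$.

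First I would write $\o$ explicitly. Let $f(x_{1},x_{2},x_{3})$ be the dehomogenised equation of $Z_{8}$ in the affine chart $\AA^{3}=\{y_{4}\neq0\}$. With a suitable choice of the two coordinates appearing in the numerator, the two-form may be taken on an open subset of this chart to be
\[
\o=\frac{dx_{2}\wedge dx_{3}}{\partial f/\partial x_{1}}.
\]
The rational self-map $\l_{\{2\},\{3,4\}}=(P_{1}:\dots:P_{4})$ preserves the chart $\{y_{4}\neq0\}$ (the component $P_{4}$ does not vanish generically there), so one may substitute the dehomogenised components of the explicit degree-$10$ polynomials $P_{1},\dots,P_{4}$ recorded in the ancillary file into $\o$ and simplify. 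The expected outcome, by analogy with the heptagon case where $\lldt^{*}\o=-2\o$ and with the interpretation of $\l$ as the multiplication-by-$(-2)$ map on the elliptic modular surface, is a scalar relation
\[
\l_{\{2\},\{3,4\}}^{*}\o=c\,\o\qquad\text{with}\qquad |c|=2.
\]
Once this is verified, the conclusion is immediate: applying $\l_{\{2\},\{3,4\}}$ multiplies the volume form $\o\wedge\bar{\o}$ by $|c|^{2}=4$, and since $\int_{Z_{8}^{s}}\l_{\{2\},\{3,4\}}^{*}(\o\wedge\bar{\o})=\deg(\l_{\{2\},\{3,4\}})\int_{Z_{8}^{s}}\o\wedge\bar{\o}$, the topological degree equals $4$.

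The main obstacle is purely computational: one must confirm that the pull-back of $\o$ is genuinely a \emph{constant} multiple of $\o$, and not merely a rational multiple, which requires reducing a large expression built from the degree-$10$ polynomials $P_{1},\dots,P_{4}$ modulo the equation of $Z_{8}$. In practice this is done in Magma by simplifying the ratio $\l_{\{2\},\{3,4\}}^{*}\o/\o$ in the function field $\CC(Z_{8})$ and checking that it reduces to a constant of absolute value $2$. As an independent cross-check not relying on the two-form, one could instead compute the degree directly as the cardinality of the generic fibre of $\l_{\{2\},\{3,4\}}$ — for instance by counting, via a Gröbner basis elimination, the solutions $x$ of the system $\l_{\{2\},\{3,4\}}(x)=x_{0}$ for a generic $x_{0}\in Z_{8}$ — and verify that it again returns $4$.
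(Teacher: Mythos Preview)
Your proposal is correct and follows essentially the same approach as the paper: apply the method of \cite{Voisin} on the affine chart $y_{4}\neq0$, write $\o=\dfrac{dx_{2}\wedge dx_{3}}{\partial f/\partial x_{1}}$, compute directly that $\l_{\{2\},\{3,4\}}^{*}\o=-2\o$, and conclude that the volume form $\o\bar{\o}$ is multiplied by $4$. The paper's proof is terser (it omits your remarks on crepancy and the Gr\"obner-basis cross-check) but is otherwise identical in substance.
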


\begin{proof}
We again apply the method from \cite{Voisin}. Let $f(x_{1},x_{2},x_{3})$ be
the equation of the quartic $Z_8$ in the chart $U_{4}:y_{4}\neq0$.
The space of global non-vanishing differential $2$-forms is generated
by a form~$\o$, which one can choose so that on an open set of $U_{4}$
one has: 
$
\omega=\frac{dx_{2}\wedge dx_{3}}{\partial f/\partial x_{1}}.
$
The rational self-map $\l_{\{2\},\{3,4\}}$ preserves $U_{4}$.  A direct computation gives that 
$
\l_{\{2\},\{3,4\}}^{*}\o=-2\o.
$
The pull-back by $\l_{\{2\},\{3,4\}}$ of the volume form $\om\bar{\om}$
is therefore $4\om\bar{\om}$, thus the degree of $\l_{\{2\},\{3,4\}}$
is $4$. 
\end{proof}

\subsection{The dynamical system $(Z_{8},\protect\l_{\{2\},\{3,4\}})$ is semi-conjugated to the plane}

The four polynomials $P_{1},\dots,P_{4}$ such that 
$\l_{\{2\},\{3,4\}}=(P_{1}:\dots:P_{4})$
 verify $\deg_{y_{1}}(P_{1})=1$ and $\deg_{y_{k}}(P_{1})=0$
for $k\geq2$.
Let $\pi:Z_{8}\dashrightarrow\PP^{2}$ be the double cover
obtained by projecting from the double point $(1:0:0:0)$ of $Z_{8}$. 
\begin{lem}
The branch curve $B$ of $\pi$ is the union of the conic $C=\{z_{1}^{2}-z_{2}z_{3}=0\}$
and the quartic curve 
\[
Q=\{z_{1}^{2}z_{2}^{2}+2z_{1}^{2}z_{2}z_{3}-4z_{1}z_{2}^{2}z_{3}-z_{2}^{3}z_{3}+z_{1}^{2}z_{3}^{2}-4z_{1}z_{2}z_{3}^{2}+6z_{2}^{2}z_{3}^{2}-z_{2}z_{3}^{3}\}.
\]
\end{lem}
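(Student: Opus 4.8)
The plan is to imitate the branch-locus computation of the analogous $n=7$ lemma: for a double cover of $\PP^{2}$ obtained by projecting a quartic surface from one of its double points, the branch curve is cut out by the discriminant of the quartic equation taken with respect to the coordinate centered at the projection point. First I would make the projection explicit. Since $(1:0:0:0)$ is a double point of $Z_{8}$, a general line through it meets $Z_{8}$ with multiplicity two there and in exactly two residual points; this is precisely why $\pi$ has degree $2$. Writing $z_{1}=y_{2}$, $z_{2}=y_{3}$, $z_{3}=y_{4}$ for the coordinates on the target $\PP^{2}$, I would reorganize the defining equation of $Z_{8}$ as a quadratic polynomial in $y_{1}$,
\[
f=a_{2}\,y_{1}^{2}+a_{1}\,y_{1}+a_{0},
\]
with
\[
a_{2}=z_{3}(z_{2}-z_{1}),\quad a_{1}=z_{1}^{2}z_{2}+z_{1}^{2}z_{3}-2z_{1}z_{2}z_{3}-z_{2}^{2}z_{3}+z_{2}z_{3}^{2},\quad a_{0}=z_{2}z_{3}^{2}(z_{2}-z_{1}).
\]

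The two points of the fiber $\pi^{-1}(z_{1}:z_{2}:z_{3})$ are the two roots of this quadratic in $y_{1}$, so the two sheets of $\pi$ come together exactly where the quadratic acquires a double root. Hence the branch curve $B$ is the discriminant locus $\{a_{1}^{2}-4a_{2}a_{0}=0\}\subset\PP^{2}$, in complete parallel with the $n=7$ statement. The main step is then to factor this sextic. Using $4a_{2}a_{0}=4z_{2}z_{3}^{3}(z_{2}-z_{1})^{2}$ and expanding $a_{1}^{2}$, a direct computation gives the identity
\[
a_{1}^{2}-4a_{2}a_{0}=(z_{1}^{2}-z_{2}z_{3})\,Q,
\]
where $Q$ is the quartic of the statement. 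This exhibits $B$ as the union of the conic $C=\{z_{1}^{2}-z_{2}z_{3}=0\}$ and the quartic $\{Q=0\}$, as claimed.

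I expect the only genuine obstacle to be this factorization of the degree-six discriminant: it is a bare polynomial identity, most safely verified in \texttt{OSCAR} or Magma, although it can be checked by hand by expanding both sides and comparing coefficients monomial by monomial. A minor point worth addressing is that the discriminant is computed on the generic fiber of $\pi$, so a priori it only describes the branch curve away from the center of projection and the indeterminacy locus of $\pi$; but since $\deg(a_{1}^{2}-4a_{2}a_{0})=6=2+4$ already matches the total degree of the asserted branch curve $C+\{Q=0\}$, no components can be lost and the identification is complete.
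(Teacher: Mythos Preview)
Your proof is correct and follows exactly the paper's approach: the paper's proof simply states that the branch locus is the discriminant of the equation of $Z_{8}$ with respect to $y_{1}$, and you have carried this out explicitly, including the factorization of the resulting sextic into $(z_{1}^{2}-z_{2}z_{3})\cdot Q$. Your added remarks on the degree count and the computational verification are more detailed than the paper's terse argument but entirely in the same spirit.
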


\begin{proof}
The ramification locus of $\pi_{1}$ is the discriminant of the equation
of $Z_8$ with respect to the variable $y_{1}$. The image of the ramification
curve by $\pi_{1}$ is the curve $B$.
\end{proof}
The quartic $Q$ has geometric genus $0$ and is singular at the points
$(1:0:0),(1:1:1)$ with singularities $A_{3}$ and $A_{1}$. The curve
$B=C+Q$ is singular at the points 
\[
(1:0:0),(0:1:0),(0:0:1),(1:1:1)
\]
 with singularities $A_{3},A_{5},A_{5},D_{4}$, respectively.

Let us define the polynomials $Q_{k}=P_{k+1}(0,z_{1},z_{2},z_{3})$
$(k=1,2,3$) and the rational self-map $\mu:\PP^{2}\dashrightarrow\PP^{2}$,
$\mu=(Q_{1}:Q_{2}:Q_{3})$. One has 
$
\mu^{*}(B)=B+2D
$
for a degree $27$ curve $D$. Using Subsection \ref{subsecTheoretical-SConj},
the double cover of $\PP^{2}$ branched
over $B=C+Q$ is birational to the surface $Z_{8}$ and $(Z_{8},\l_{\{2\},\{3,4\}})$
is semi-conjugated to $(\PP^{2},\mu)$.

The indetermination points of $\mu$ are the $9$ points
\[
\begin{array}{c}
(1:0:0),(0:1:0),(0:0:1),(1:1:1),(1:0:1),(1:1:0),\\
(0:1:1),(-\sqrt{2}+2:-2\sqrt{2}+3:1),(\sqrt{2}+2:2\sqrt{2}+3:1).
\end{array}
\]
The image by $\mu$ of $Q$ is the conic $C$; the rational map $\mu$
restricts to the identity on $C$.

\begin{rem}
\label{rem:No-Cano-label-2}
The choice for the labelings of the lines in the unlabeled
line arrangement $\L_{\{2\},\{3,4\}}(\cC_{0})$ was made
 so that the defining polynomials of the rational self-map $\l_{\{2\},\{3,4\}}$
are of low degree. Moreover, for the other choices we tried, the degrees
of the polynomials defining the analog of $\l_{\{2\},\{3,4\}}$ with
respect to any variables $y_{i}$ were never $1,0,0,0$, so that it
was not possible to understand that rational self-map $\l_{\{2\},\{3,4\}}$
as a semi-conjugacy with the plane. 
\end{rem}

\subsection{The K3 surface $Z_{8}$ and the modular surface $\Xi_{1}(8)$}

One has:
\begin{prop}
The K3 surface $Z_8^{s}$ is the unique K3 surface with discriminant
$-8$ and Picard number $20$.
\end{prop}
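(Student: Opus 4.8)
The plan is to mirror the argument already used for the heptagon (the subsection on $\Xi_{1}(7)$): since $Z_{8}^{s}$ is known to be a K3 surface, it suffices to compute its Néron--Severi lattice $\NS(Z_{8}^{s})$, show it has rank $20$ and discriminant $-8$, and then invoke the classification of singular K3 surfaces to get uniqueness. Recall that for any complex K3 surface one has $\rho\le h^{1,1}=20$, so exhibiting $20$ linearly independent algebraic classes forces $\rho=20$; thus $Z_{8}^{s}$ is a singular K3 surface, its transcendental lattice $T(Z_{8}^{s})$ is even, positive definite of rank $2$, and $|\Disc(T)|=|\Disc(\NS)|$ because $H^{2}(Z_{8}^{s},\ZZ)$ is even unimodular of signature $(3,19)$.

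First I would produce an explicit generating set of divisor classes. Resolving the six rational double points $A_{2},A_{2},A_{3},A_{4},A_{3},A_{1}$ of $Z_{8}$ yields $2+2+3+4+3+1=15$ exceptional $\cu$-curves, whose mutual intersections follow the corresponding $ADE$ Dynkin diagrams. To these I would add the class of a hyperplane section of the quartic and the strict transforms of the rational curves (lines, conics) contained in $Z_{8}$, exactly as the lines on $Z_{7}$ were used in the heptagon case; should these not already span a rank-$20$ lattice, one supplements them with components and a torsion section of the natural elliptic fibration $Z_{8}^{s}\to\PP^{1}$ over $X_{1}(8)\simeq\PP^{1}$ and applies the Shioda--Tate formula. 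The pairwise intersection numbers, read off from the incidences of these curves with the exceptional divisors, give an explicit $20\times20$ Gram matrix $G$.

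Next I would evaluate $\det G$ and check that it equals $-8$ (the sign being forced by the signature $(1,19)$). This already determines $\NS(Z_{8}^{s})$. Indeed, if $L$ denotes the rank-$20$ sublattice spanned by the chosen classes, then $\Disc(L)=[\NS:L]^{2}\,\Disc(\NS)$, so a proper inclusion $L\subsetneq\NS$ would require $[\NS:L]=2$ and hence $\Disc(\NS)=-2$. But no even lattice of determinant $\pm2$ and rank $2$ exists: writing a putative even binary Gram matrix $\left(\begin{smallmatrix}2a & b\\ b & 2c\end{smallmatrix}\right)$, its determinant $4ac-b^{2}$ is $\equiv0$ or $3\pmod4$ (according as $b$ is even or odd), never $2$; equivalently the transcendental lattice $T$ could not have determinant $2$. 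Therefore $L=\NS(Z_{8}^{s})$ and $\Disc(\NS)=-8$, so $T(Z_{8}^{s})$ is even, positive definite, of rank $2$ and discriminant $8$.

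Finally I would invoke the Shioda--Inose classification: singular K3 surfaces correspond bijectively, via the transcendental lattice, to $SL_{2}(\ZZ)$-classes of positive definite even binary quadratic forms. The same congruence analysis as above shows that the only such form of determinant $8$ is $\left(\begin{smallmatrix}2 & 0\\ 0 & 4\end{smallmatrix}\right)$ (equivalently, the order of discriminant $-8$ has class number one), so $T(Z_{8}^{s})$ is unique up to isometry. Hence $Z_{8}^{s}$ is, up to isomorphism, the unique K3 surface with Picard number $20$ and discriminant $-8$, namely $\Xi_{1}(8)$. \emph{The main obstacle is the middle step}: exhibiting genuinely $20$ independent curve classes and computing their intersection matrix correctly, so that the rank is exactly $20$ and $\det G$ is exactly $-8$ rather than an index-square multiple of it. The even-lattice discriminant argument is precisely what makes the saturation automatic once $\det G=-8$ is confirmed, so the honest work lies in the intersection-number computation among the $\cu$-curves and the strict transforms of the curves on $Z_{8}$.
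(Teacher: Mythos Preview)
Your proposal is correct and follows essentially the same route as the paper: the authors take the fifteen exceptional $(-2)$-curves together with the strict transforms of eight explicitly listed lines on $Z_{8}$, compute (via Magma) that these span a rank-$20$ lattice of discriminant $-8$, and then conclude by noting that no singular K3 has discriminant $-2$ while the one of discriminant $-8$ is unique. Your saturation argument (ruling out an even rank-$2$ lattice of determinant $2$) is exactly the content of the paper's remark that ``there is no K3 surface with Picard number $20$ and discriminant $-2$''; the only thing the paper supplies beyond your sketch is the concrete list of eight lines that makes the intersection computation go through.
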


\begin{proof}
The eight lines with equations
\[
\begin{array}{c}
(y_{1}=y_{3}=0),(y_{1}=y_{4}=0),(y_{2}=y_{3}=0),(y_{2}=y_{4}=0),(y_{3}=y_{4}=0),\\
(y_{1}-y_{4}=y_{2}-y_{4}=0),(y_{1}-y_{3}=y_{2}-y_{4}=0),(y_{2}-y_{4}=y_{3}-y_{4}=0)
\end{array}
\]
are contained in the surface $Z_8$. Using Magma, one can compute that their
strict transforms on $Z_8^{s}$ together with the $15$ $(-2)$-curves
coming from the resolutions of the singularities of $Z_8$, generate
a rank $20$ lattice with discriminant $-8$. There is no K3 surface
with Picard number $20$ and discriminant $-2$ and there is a unique K3 surface
with Picard number 20 and discriminant -8 (see e.g. \cite{Schuett}) which yields the conclusion. 
\end{proof}
\begin{prop}\label{ModSur8}
The surface $Z_8$ is (isomorphic to) the elliptic modular surface $\Xi_{1}(8)$
above the modular curve $X_{1}(8)$.
\end{prop}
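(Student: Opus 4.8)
The plan is to reproduce for $n=8$ the elliptic-fibration argument used for $n=7$ in Subsection~\ref{subsec:Fibration-preserved-by}, thereby upgrading the abstract K3 isomorphism (already implicit in the previous proposition) to an isomorphism of elliptic surfaces over $X_{1}(8)$. The preceding proposition identifies $Z_{8}^{s}$ as \emph{the} K3 surface with Picard number $20$ and discriminant $-8$; since $\Xi_{1}(8)$ is also such a surface (see e.g.\ \cite{Schuett}), the two are already isomorphic as K3 surfaces. What remains, and what the statement really asserts, is to match the elliptic pencils so that the base of the fibration is $X_{1}(8)$ and the generic fibre carries the universal order-$8$ structure.

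First I would pick one of the eight lines of $Z_{8}$ exhibited in the proof of the preceding proposition --- say $L:\,y_{3}=y_{4}=0$, chosen so that $L$ lies in a singular fibre --- and let $\gamma:Z_{8}\to\PP^{1}$ be the elliptic fibration obtained by projecting from $L$ (a plane through $L$ cuts $Z_{8}$ in $L$ plus a residual plane cubic, which is the genus-one fibre). Substituting a linear parametrisation of the pencil of planes through $L$ into the quartic equation of $Z_{8}$ yields a plane cubic over $\QQ(t)$; clearing denominators and completing the cube produces a Weierstrass model $E_{/\QQ(t)}$ of $\gamma$ on $Z_{8}^{s}$, exactly as the substitution $(x,1+ty,y,1)$ did in the heptagon case.

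Second I would compute the discriminant and $j$-invariant of $E$ and read off the singular fibres. For the modular fibration these should be of type $2I_{8}+I_{4}+I_{2}+2I_{1}$ (Euler numbers $16+4+2+2=24$, as required for a K3), so that the trivial lattice spanned by fibre components and the zero section already has rank $20$ and the determinant recorded before, forcing the Mordell--Weil rank to be $0$. Third, and decisively, I would exhibit the Mordell--Weil torsion: an explicit section $p_{t}\in E(\QQ(t))$ of order $8$, in analogy with the order-$7$ generator found for $n=7$. The pair $(E,p_{t})$ is then a family of elliptic curves equipped with a point of order $8$, so its base is $X_{1}(8)$ and $Z_{8}^{s}\simeq\Xi_{1}(8)$ as elliptic modular surfaces; as a final check one compares $j_{E}(t)$ with the $j$-invariant of a Weierstrass model of $\Xi_{1}(8)$ from \cite{TY}, up to a M\"obius change of the parameter $t$ (the analogue of the relation $j_{E'}(t)=j_{E}(-1/t)$ found for the heptagon).

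The step I expect to be the main obstacle is the third one. A K3 surface of Picard number $20$ carries many inequivalent elliptic fibrations, so I must choose the projecting line $L$ so that $\gamma$ is genuinely the modular fibration (with fibres over the cusps of $X_{1}(8)$) and not some other pencil; and then the order-$8$ section must be located and its order verified on the Weierstrass model, after which the comparison with the literature model of $\Xi_{1}(8)$ may require a nontrivial automorphism of the base $\PP^{1}\simeq X_{1}(8)$ before the $j$-invariants agree.
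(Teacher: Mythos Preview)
Your proposal is correct and follows essentially the same route as the paper: project from a line on $Z_{8}$ to obtain an elliptic fibration, compute its Weierstrass model over $\QQ(t)$, and identify it with the known model of $\Xi_{1}(8)$ from \cite{TY} via a M\"obius change of parameter on the $j$-invariant. The paper uses the line $y_{2}-y_{4}=y_{3}-y_{4}=0$ (not $y_{3}=y_{4}=0$) and finds the singular fibres $2I_{8}+I_{4}+I_{2}+2I_{1}$ exactly as you predict.

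The one structural difference is that you place the weight on exhibiting an explicit order-$8$ section and treat the $j$-invariant comparison as a confirmatory check, whereas the paper does the reverse: it skips the torsion section entirely and makes the $j$-invariant identity $j(E)\bigl(\tfrac{1}{2}(1-\tfrac{1}{t})\bigr)=j(E')(t)$ the decisive step. Once the $j$-invariants match after a M\"obius reparametrisation, the two Weierstrass fibrations are isomorphic over $\PP^{1}$, so there is no need to locate the order-$8$ point by hand; your anticipated ``main obstacle'' therefore evaporates. Your approach would also work and has the merit of being self-contained (it does not rely on the literature model), but it is more laborious than necessary.
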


\begin{proof}
The projection map from the line $y_{2}-y_{4}=y_{3}-y_{4}=0$ induces
a fibration $Z_8\to\PP^{1}$. By evaluating the Equation of $Z_8$ at
$(X,1+t(Y-1),Y,1)$, one gets the cubic affine model 
\[
(t-1)X^{2}-t^{2}XY^{2}+XY+(t-1)^{2}X+(t-1)Y=0
\]
of the generic fiber, where $t$ is the parameter of $\PP^{1}$. One
computes that the Weierstrass model of it is the elliptic curve 
\[
\begin{array}{c}
E:y^{2}=x^{3}+(4t^{4}-8t^{3}+4t^{2}+1)/t^{4}x^{2}+8(t-1)^{2}/t^{6}x+16(t-1)^{4}/t^{8}.\end{array}
\]
The associated elliptic surface is a smooth model of the K3 surface
$Z_8$: it is isomorphic to $Z_8^{s}$. One computes that the singular
fibers of the fibration are 
$
2I_{8}+I_{4}+I_{2}+2I_{1},
$
at the points $1,0,\infty,1/2,t^{2}-t-1/4=0$, respectively.

By \cite[Section 2.3.3]{TY}, the equation of a Weierstrass model
of the elliptic surface $\Xi_{1}(8)$ above the modular curve $X_{1}(8)$
is
\[
E':\,\,\eta^{2}=\xi^{3}+(2-s^{2})\xi^{2}+\xi,
\]
where $s=2t^{2}/(t^{2}-1)$. To check that $\Xi_{1}(8)$ is isomorphic
to $Z_8^{s}$, one just has to compare the two $j$-invariants $j(E)(t)\in\QQ(t)$
and $j(E')(t)\in\QQ(t)$. We compute that $j(E)(\tfrac{1}{2}(1-\tfrac{1}{t}))=j(E')(t)$,
therefore $E$ is isomorphic to $E'$, and $\Xi_{1}(8)\simeq Z_8^{s}$. 
\end{proof}

\subsection{Action of $\protect\aut(M_{8})$}

The automorphism group of $M_{8}$ is generated by the involutions 
\[
\begin{array}{c}
s_{1}=(2,4)(3,7)(6,8)(9,11)(10,14)(13,15),s_{2}=(2,6)(4,8)(9,13)(11,15),\\
s_{3}=(1,2)(3,8)(4,7)(5,6)(9,13)(10,12)(14,16).
\end{array}
\]
The group $\aut(M_{8})$ is the semi-direct product $\ZZ/8\ZZ\rtimes (\ZZ/2\ZZ)^{2}$.
One computes that it acts faithfully on the K3 surface $Z_{8}$. The
map $s_{2}$ (acting on $Z_8$) is given in the ancillary file of 
the arXiv version of this paper. It is a birational
involution of $\PP^{3}$. 

The group of elements $\s$ commuting with the action of $\l_{\{2\},\{3,4\}}$
is isomorphic to $(\ZZ/2\ZZ)^{2}$. The involution $s=(1,5)(2,6)(3,7)(4,8)$
is the unique automorphism of $\aut(M_{8})$ such that $\l_{\{2\},\{3,4\}}\circ s=\l_{\{2\},\{3,4\}}.$

\subsection{Periodic line arrangements}

Let us prove:
\begin{prop} \label{prop22}
The surface  $Z_{8}$ contains a curve $C_{3}$ of geometric genus $5$
such that each point of $C_{3}$ is fixed by $\l_{\{2\},\{3,4\}}$
and for a generic point $x$ of $C_{3}$, the associated line arrangement $\cC_{0}(x)$
in $\PP^{2}$ is periodic of period $3$ for the action of $\L_{\{2\},\{3,4\}}^{\ell}$.
\end{prop}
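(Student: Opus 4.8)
The plan is to combine the explicit polynomial form of $\l_{\{2\},\{3,4\}}$ with the modular interpretation $Z_{8}^{s}\simeq\Xi_{1}(8)$ of \Cref{ModSur8}. First I would locate the pointwise fixed locus of $\l_{\{2\},\{3,4\}}$ on $Z_{8}$. Writing $\l_{\{2\},\{3,4\}}=(P_{1}:\dots:P_{4})$ with the explicit degree $10$ polynomials $P_{i}$, the fixed scheme is cut out on $Z_{8}$ by the condition that $(P_{1},\dots,P_{4})$ be proportional to $(y_{1},\dots,y_{4})$, i.e.\ by the vanishing of the $2\times2$ minors of the matrix with rows $(P_{1},\dots,P_{4})$ and $(y_{1},\dots,y_{4})$. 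A primary decomposition of the resulting ideal in \texttt{OSCAR} splits the fixed scheme into its components; besides some rational and low-dimensional pieces (the zero section of the fibration and finitely many isolated fixed points) there is one distinguished curve, which I would take as $C_{3}$.

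To identify $C_{3}$ and to anticipate its genus, I would exploit the elliptic fibration $Z_{8}\to\PP^{1}$ of \Cref{ModSur8} together with the fact, recalled in the introduction, that $\L$ induces multiplication by $-2$ on $\Xi_{1}(n)$. Concretely I would verify, using the generic fibre of the Weierstrass model appearing in the proof of \Cref{ModSur8}, that $\l_{\{2\},\{3,4\}}$ acts on the fibres $E$ as $[-2]$. Since $-2\equiv1\pmod 3$, every $3$-torsion point of every fibre is then fixed, so the nonzero $3$-torsion multisection lies in the fixed locus; this multisection (or an irreducible component of it) is $C_{3}$. The period-$3$ behaviour of the arrangements is explained by the same picture: for a $3$-torsion point $p$ the class $[\cC_{0}(x)]$ is fixed in the moduli space, yet $\L_{\{2\},\{3,4\}}^{\ell}$ realizes the isomorphism $\cC_{1}=\L_{\{2\},\{3,4\}}^{\ell}(\cC_{0})\cong\cC_{0}$ through a nontrivial projective transformation $g$ induced by translation by $p$, which has order $3$. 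Thus the concrete labeled arrangements $\cC_{0},\ \cC_{1}=g\cC_{0},\ \cC_{2}=g^{2}\cC_{0}$ are pairwise distinct with $\cC_{3}=g^{3}\cC_{0}=\cC_{0}$, i.e.\ $\cC_{0}(x)$ has period $3$.

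Two things then remain to be pinned down, and I expect these to be the bulk of the work. The first is the verification of the period-$3$ claim on a generic point: taking $x$ with coordinates in the function field $\CC(C_{3})$, I would use the formulas for $\cC_{0}(x)$, apply $\L_{\{2\},\{3,4\}}^{\ell}$ three times, and check symbolically that $\cC_{3}=\cC_{0}$ while $\cC_{0},\cC_{1},\cC_{2}$ are pairwise distinct (equivalently that the transformation $g$ with $g\cC_{0}=\cC_{1}$ satisfies $g^{3}=\mathrm{id}\neq g$). The second, and harder, part is the geometric genus. I would compute it directly by normalizing the explicit curve $C_{3}\subset Z_{8}$ in \texttt{OSCAR}/Magma and reading off $5$, cross-checking this against the modular description: viewing $C_{3}\to X_{1}(8)\cong\PP^{1}$ as the degree $8$ cover by the nonzero $3$-torsion and applying Riemann--Hurwitz with the ramification dictated by the singular fibres $2I_{8}+I_{4}+I_{2}+2I_{1}$ of \Cref{ModSur8}. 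The main obstacle will be this genus computation, namely controlling the singularities of $C_{3}$ and the degeneration of the torsion multisection over the singular fibres, so as to certify that the distinguished component is irreducible and has geometric genus exactly $5$.
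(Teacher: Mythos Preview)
Your approach is viable but takes a genuinely different route from the paper. The paper neither computes the fixed scheme of $\l_{\{2\},\{3,4\}}$ directly nor invokes the modular description of $Z_{8}$ for this proposition. Instead it proceeds via matroid realization spaces: one first exhibits a single fixed point $x=(794:582:116:1)$ over $\FF_{1013}$ for which $\cC_{0}(x)$ is $3$-periodic, reads off the matroid $N_{24}$ of the resulting $24$-line arrangement $L_{24}=\cC_{0}\cup\cC_{1}\cup\cC_{2}$ (with $t_{2}=24$, $t_{3}=84$), and then computes the realization space $\cR(N_{24})$ over $\CC$ with \texttt{OSCAR}. This turns out to be an irreducible curve of geometric genus $5$, and the forgetful map $\cR(N_{24})\to\cR_{8}$, $\cC_{0}\cup\cC_{1}\cup\cC_{2}\mapsto\cC_{0}\cup\cC_{1}$, is injective since $\cC_{2}=\L_{\{2\},\{3,4\}}^{\ell}(\cC_{1})$ is recoverable; its image is $C_{3}$.

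The trade-off: the paper's argument sidesteps the fixed-locus ideal, the irreducibility question, and the Riemann--Hurwitz analysis entirely---genus and irreducibility drop out of the realization-space machinery, and period $3$ is built into $N_{24}$ by construction. Your route offers a conceptual explanation (the $3$-torsion multisection of the fibration) for \emph{why} such a curve exists, but carries more overhead: you must check that $\l_{\{2\},\{3,4\}}$ is exactly $[-2]$ on the fibration of \Cref{ModSur8} (recall that for $n=7$ there was a nontrivial base twist), establish irreducibility of the nonzero $3$-torsion cover, and do the ramification bookkeeping over $2I_{8}+I_{4}+I_{2}+2I_{1}$. One caution: your heuristic that the projective transformation $g$ with $\cC_{1}=g\cC_{0}$ is ``induced by translation by $p$'' and hence has order $3$ is not justified as stated---translation on the modular fibre $E\subset\Xi_{1}(8)$ does not a priori act by an element of $PGL_{3}$ on the plane containing the arrangement---so you do need the direct symbolic check over $\CC(C_{3})$ that you propose, rather than this shortcut.
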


\begin{rem}
We recall that $\L_{\{2\},\{3,4\}}$ is an operator acting on line arrangements,
whereas $\l_{\{2\},\{3,4\}}$ is the rational self-map induced by $\L_{\{2\},\{3,4\}}$: 
it acts on line arrangements modulo
projective transformations.
In particular, Proposition \ref{prop22} implies that 
for a line arrangement $\cC$ corresponding to a point on the curve $C_3$, one has
$(\L_{\{2\},\{3,4\}})^{\circ 3}(\cC)=\cC$ with $\L_{\{2\},\{3,4\}}(\cC)\neq \cC$, but 
$\L_{\{2\},\{3,4\}}(\cC)$ is projectively equivalent to $\cC$. The union of the line arrangements 
$(\L_{\{2\},\{3,4\}})^{\circ k}(\cC),\,k=0,1,2$ has 24 lines with $84$ triple points, $24$ double points, and no other singularities.
\end{rem}

\begin{proof}
We searched by random an example of a $\l$-fixed  
point $x$ over a finite field and we found the point $x=(794:582:116:1)\in\PP^{3}(\FF_{1013})$ in the surface $(\cR_{8})_{/\FF_{1013}}$.
The corresponding line arrangement $\cC_{0}\cup\cC_{1}$ is $3$-periodic for the operator $\L_{\{2\},\{3,4\}}^{\ell}$:
 the line arrangements $\cC_{0}\cup\cC_{1}$, 
$\cC_{1}\cup\cC_{2}$ and $\cC_{2}\cup\cC_{0}$ are realizations of $M_8$ (over $\FF_{1013}$), and 
$\cC_{k+1\,mod\,3}=\L_{\{2\},\{3,4\}}^{\ell}(\cC_k)$. 
One computes that the matroid $N_{24}$ associated to $\cC_{0}\cup\cC_{1}\cup\cC_{2}$
has an irreducible one dimensional moduli space $\cR(N_{24})$ over $\CC$ and that
 the geometric genus of the compactification of $\cR(N_{24})$ is $5$.
Let $\cC_{0}'\cup\cC_{1}'\cup\cC_{2}'$ be a realization (over $\CC$) of $N_{24}$. 
From the combinatorics of $M_8$ and $N_{24}$, the line arrangements $\cC_{0}'\cup\cC_{1}'$, 
$\cC_{1}'\cup\cC_{2}'$ and $\cC_{2}'\cup\cC_{0}'$ are realizations of $M_8$, and 
$\cC_{k+1\,mod\,3}'=\L_{\{2\},\{3,4\}}^{\ell}(\cC_k')$ if the realization is generic. 
The natural
map $\cR(N_{24})\to\cR_{8}$, which to a realization $\cC_{0}'\cup\cC_{1}'\cup\cC_{2}'$
of $N_{24}$ associates $\cC_{0}'\cup\cC_{1}'$ is one-to-one
onto its image (a curve denoted $C_{3}$) in $\cR_8$, since one may recover $\cC_{2}'$ (and therefore
$\cC_{0}'\cup\cC_{1}'\cup\cC_{2}'$) as $\cC_{2}'=\L_{\{2\},\{3,4\}}^{\ell}(\cC_{1}')$. 
A computer computation gives that $C_3$ has genus $5$ and 
$\Lambda_{\{2\},\{3,4\}}(\eta)$ is projectively equivalent to $\eta$,
 where $\eta$ is the generic point of $C_3$, thus any specialization 
 $\eta'$ is such that $\l_{\{2\},\{3,4\}}(\eta')=\eta'$, and the curve $C_3$ is point-wise fixed by $\l_{\{2\},\{3,4\}}$.
\end{proof}

\end{document}